\nonstopmode \numberwithin{equation}{section}
\nonstopmode \numberwithin{equation}{section}
\newtheorem{theorem}{Theorem}[section]
\theoremstyle{remark}
\theoremstyle{definition}
\newtheorem{definition}{Definition}[section]
\theoremstyle{plain}
\newtheorem*{thmA}{Theorem A}
\newtheorem*{lemA}{Lemma A}
\newtheorem*{lemB}{Lemma B}
\numberwithin{equation}{section}
\numberwithin{theorem}{section}
\newcounter{minutes}\setcounter{minutes}{\time}
\newcounter{hours}\setcounter{hours}{\time}
\begin{document}

\title{Sharp radius of concavity for certain classes of analytic functions}

\author{Molla Basir Ahamed}
\address{Molla Basir Ahamed, Department of Mathematics, Jadavpur University, Kolkata-700032, West Bengal, India.}
\email{mbahamed.math@jadavpuruniversity.in}

\author{Rajesh Hossain}
\address{Rajesh Hossain, Department of Mathematics, Jadavpur University, Kolkata-700032, West Bengal, India.}
\email{rajesh1998hossain@gmail.com}

\subjclass[2020]{Primary 30C45, 30C55, 30C80}
\keywords{Radius of concavity, Radius problem, Analytic functions, Univalent functions, Starlike functions, Convex functions}

\def\thefootnote{}
\footnotetext{ {\tiny File:~\jobname.tex,
printed: \number\year-\number\month-\number\day,
          \thehours.\ifnum\theminutes<10{0}\fi\theminutes }
} \makeatletter\def\thefootnote{\@arabic\c@footnote}\makeatother

\begin{abstract}
Let $\mathcal{A}$ be the class of all analytic functions $f$ defined on the open unit disk $\mathbb{D}$ with the normalization $f(0)=0=f^{\prime}(0)-1$. This paper examines the radius of concavity for various subclasses of $\mathcal{A}$, namely $\mathcal{S}_0^{(n)}$, $\mathcal{K(\alpha,\beta)}$, $\mathcal{\tilde{S^*}(\beta)}$, and $\mathcal{S}^*(\alpha)$. It also presents results for various classes of analytic functions on the unit disk.  All the radii are best possible.
\end{abstract}
\maketitle
\pagestyle{myheadings}
\markboth{M. B. Ahamed and R. Hossain}{Sharp radius of concavity for certain class of analytic functions}

\tableofcontents
\section{\bf Introduction}
Throughout the article, we will denote the open unit disk of the complex plane $ \mathbb{C} $ by $ \mathbb{D}:=\{z\in\mathbb{C} : |z|<1\} $. Let $\mathcal{A}$ be the class of functions $f$ analytic in the unit disk $\mathbb{D}$ and normalized by $f(0)=0$ and $f^{\prime}(0)=1$. We denote by $\mathcal{S}$ the class of univalent functions in $\mathcal{A}$. For $\alpha\leq 1$, let $S^*(\alpha)$ denote the subclass of $\mathcal{A}$ of starlike functions of order $\alpha$. If $0\leq \alpha<1$, then $f$ is said to be univalent and starlike of order $\alpha$, \textit{i.e.,} we have $\mathcal{S}^*_{\alpha}\subset \mathcal{S}$. A function $f\in\mathcal{A}$ is said to be starlike of order $\alpha$ if, and only if, 
\begin{align*}
	{\rm Re} \left(\frac{z f^{\prime}(z)}{f(z)}\right)>\alpha.
\end{align*}
The class of starlike functions of order $\alpha$ is denoted by $S^*(\alpha)$.  \vspace{1.2mm}

 Let $\mathcal{C}_{\alpha}$ be the class of convex functions of order $\alpha$ corresponding to $\mathcal{S}_{\alpha}$ in the usual way that $g\in\mathcal{C}_{\alpha} \Leftrightarrow zg^{\prime}\in \mathcal{S}^*(\alpha)$. Note that $\mathcal{S}^*(0)=\mathcal{S}^*$ and $\mathcal{C}_{0}=\mathcal{C}$. It is well known that $f\in\mathcal{C}$ if, and only if
 \begin{align*}
 	{\rm Re}\left(1+\frac{z f^{\prime\prime}(z)}{f^{\prime}(z)}\right)>0,\; z\in\mathbb{D},
 \end{align*}
 and $f\in\mathcal{S}^*$ if, and only if
 \begin{align*}
 	{\rm Re}\left(\frac{z f^{\prime}(z)}{f(z)}\right)>0,\; z\in\mathbb{D}.
 \end{align*}
 Further, it is worth mentioning that $\mathcal{C}\subsetneq\mathcal{S}^*$. Interestingly, each $f\in\mathcal{S}$ maps $\mathbb{D}_r:=\{z\in\mathbb{C} : |z|<r\}$, $r\in (0, 1]$ onto a convex domain if, and only if, ${\rm Re}\left(1+zf^{\prime\prime}(z)/f^{\prime}(z)\right)>0$ for $z\in\mathbb{D}_r$ (see \cite[p. 105, Problem 108]{Pólya-Szeg˝o-1954}).\vspace{1.2mm}
 
 In $1991$, Goodman (see \cite{Goodman-UCF-1971,Goodman-USF-1971}) introduced the concepts uniform convexity and uniform starlikeness. A function $f\in\mathcal{S}$ is said to be uniformly convex (starlike) if for every ciruclar arc $\gamma$ contained in $\mathbb{D}$, with center $\zeta_0\in\mathbb{D}$, the image arc $f(\gamma)$ is convex (starlike with respect to $f(\zeta_0)$). Goodman denoted the class of uniformly convex functions by $UCV$ and the class of uniformly starlike functions by $UST$.  \vspace{1.2mm}

Let $f$ and $g$ be analytic in $\mathbb{D}$. Then we say that $f$ is subordinate to $g$ in $\mathbb{D}$, written by $f\prec g$, if there exists a function $\omega(z)$ analytic in $\mathbb{D}$ which satisfies $\omega(0)=0$, $|\omega(z)|<1$ and $f(z)=g(\omega(z))$ for all $z\in\mathbb{D}$. If $g$ is univalent in $\mathbb{D}$, then the subordination $f\prec g$ is equivalent tot $f(0)=g(0)$ and $f(\mathbb{D})\subset g(\mathbb{D})$ (cf.  \cite{Duren-1983-NY}).\vspace{1.2mm}

We consider the class $ \mathcal{A}(p) $, where $ p\in (0, 1) $ consisting of all meromorphic functions in $ \mathbb{D} $ with a simple pole at $ z=p $ and normalized by the condition $ f(0)=0=f^{\prime}(0)-1$. Let $ \mathcal{S}(p) $ be the class of all univalent functions in $ \mathcal{A}(p) $. In $ 1931 $, Fenchel \cite{Fenchel-1931} obtained the sharp lower bound of $ |f(z)| $ for $ f\in\mathcal{S}(p) $ and the upper bound was established by Kirwan and Schober (see \cite{Kirwan-Schober-JAM-1976}) in $ 1976 $. It is well-known that if $ f\in\mathcal{S}(p) $, then 
\begin{align}\label{Eq-1.1}
	|k_p(-r)|\leq |f(z)|\leq |k_p(z)|,\;\; |z|=r<1,
\end{align}
where 
\begin{align*}
	k_p(z):=\frac{-pz}{(z-p)(1-pz)}\in \mathcal{S}(p)\; \mbox{for}\; z\in\mathbb{D}.
\end{align*}
Let $ {\rm Co}(p) $ be a subclass of $ \mathcal{S}(p) $ consisting of functions $ f $ such that $ \overline{\mathbb{D}}\setminus f(\mathbb{D}) $ is a bounded convex set, where $ \overline{\mathbb{C}}:=\mathbb{C}\cup\{\infty\} $. In \cite{Pfaltzgraff-JAM-1971}, it is proved that $ f\in {\rm Co}(p) $ if, and only if, $ f\in\mathcal{S}(p) $ and there exists a holomorphic function $ P_f $ in $ \mathbb{D} $ such that 
\begin{align*}
	{\rm Re}\; P_f(z)>0,\; z\in\mathbb{D},\; P_f(p)=\frac{1+p^2}{1-p^2}\; \mbox{and}\; P_f(0)=1,
\end{align*}
where
\begin{align}\label{Eq-1.2}
	P_f(z)=-\left(1+\frac{zf^{\prime\prime}(z)}{f^{\prime}(z)}+\frac{z+p}{z-p}-\frac{1+pz}{1-pz}\right).
\end{align}
Recently, in \cite{Bhowmik-CMFT-2024} the radius of concavity is defined. Henceforth, it is worth observing that $ g(z):=r^{-1}f(rz) $, $ z\in\mathbb{D} $ does not belong to $ {\rm Co}(p) $, whenever $ f\in {\rm Co}(p) $. Consequently, it cannot be concluded that if $ \overline{\mathbb{D}}\setminus f(\mathbb{D}) $ is convex, then $ \overline{\mathbb{D}}\setminus f(\mathbb{D}_r) $ is convex for each $ 0<r\leq 1 $. Because of this fact, the radius of concavity (w.r.t $ {\rm Co}(p) $) of subset of $ \mathcal{A}(p) $ in the following way.
\begin{definition}\cite{Bhowmik-CMFT-2024}\label{Def-1.1}
	The radius of concavity (w.r.t $ {\rm Co}(p) $) of a subset $ \mathcal{A}_1(p) $ of $ \mathcal{A}(p) $ is the largest number $ R_{{\rm Co}(p)}\in (0,1] $  such that for each function $ f\in\mathcal{A}_1(p) $, $ {\rm Re} \left(P_f(z)\right)>0 $ for all $ |z|< R_{\rm Co(p)}$, where $ P_{f}(z) $ is defined in \eqref{Eq-1.2}.
\end{definition}
In this paper we find a lower bound of the radius of concavity $ R_{\rm Co(p)} $ of  the class $ \mathcal{S}(p) $. Now, we consider functions $f$ in $ \mathcal{A} $ that map $ \mathbb{D} $ conformally onto a domain whose complement with respect to $ \mathbb{C} $ is convex and that satisfy the normalization $ f(1)=\infty $. We will denote these families of functions by $ {\rm Co}(A) $. Now $ f\in {\rm Co}(A) $ if, and only if, $ {T_f(z)}>0 $ for every $ z\in\mathbb{D} $, where $ f(0)=f^{\prime}(0)-1 $ and 
\begin{align}\label{Eq-1.3}
T_f(z)=\frac{2}{A-1}\left(\frac{(A+1)}{2}\frac{1+z}{1-z}-1-z\frac{f^{\prime\prime}(z)}{f^{\prime}(z)}\right).
\end{align}
\begin{definition}\cite{Bhowmik-CMFT-2024}\label{Def-1.2}
The radius of concavity (w.r.t $ {\rm Co(A)} $) of a subset $ \mathcal{A}_1 $ of $ \mathcal{A} $ is the largest number $ \mathrm{R}_{\mathrm{Co(A)}}\in (0,1] $  such that for each function $ f\in\mathcal{A}_1 $, $ {\rm Re} \left(T_f(z)\right)>0 $ for all $ |z|< \mathrm{R}_{\mathrm{Co(A)}}$, where $ T_{f}(z) $ is defined in \eqref{Eq-1.3}.
\end{definition}
Inspired by \cite[Definition 1.1.]{Bhowmik-CMFT-2024}, for an arbitrary family $\mathcal{F}$ of functions, we define the radius of concavity.
\begin{definition}
	The radius of concavity (w.r.t $ \mathcal{F} $), a subclass of $ \mathcal{A} $ is the largest number $ \mathrm{R}_{\mathcal{F}}\in (0,1] $  such that for each function $ f\in\mathcal{F} $, $ {\rm Re} \left(T_f(z)\right)>0 $ for all $ |z|< \mathrm{R}_{\mathcal{F}}$, where $ T_{f}(z) $ is defined in \eqref{Eq-1.3}.
\end{definition}
\subsection{\bf Motivation}
Determining the radius of concavity for a certain class of analytic functions is currently an active area of research in geometric function theory. However, there is limited focus in the literature on exploring the radius of concavity for specific classes of functions. In this paper, we were inspired by the results of Bhowmik and Biswas \cite{Bhowmik-CMFT-2024} and obtained a result that finds the radius of concavity for certain sub-classes of the class $\mathcal{A}$. The main results and their related background will be discussed in the subsequent sections.
\section{\bf Radius of concavity for functions in the class $\mathcal{S}_0^{(n)}$}
For $-1\leq B<A\leq 1$ and $p(z)=1+c_nz^n+c_{n+1}z^{n+1}+\cdots$, $n\in\mathbb{N}$, we say that $p\in P_n[A, B]$ if 
\begin{align*}
	p(z)\prec \frac{1+Az}{1+Bz},\;z\in\mathbb{D}.
\end{align*}
The class of functions $f\in\mathcal{A}$ with the property that $zf^{\prime}(z)/f(z)\in P_n[A, B]$ is denoted by $ST_n[A, B]$. If $n=1$, we drop the subscript. We see that $ST[1-2\alpha, -1]=\mathcal{S}_{\alpha}$, and we denote $ST_n[1-2\alpha, -1]=P_n(\alpha)$, and the special case $P_n(0)$, we denote simply by $P_n$. 
\begin{lemA}\emph{(see \cite[Lemma 2.1]{Ravi-Ron-Shan-CVEE-2007})}
	If $p\in P_n[A, B]$, then 
	\begin{align*}
		\bigg|p(z)-\frac{1-ABr^{2n}}{1-B^2r^{2n}}\bigg|\leq \frac{(A-B)r^n}{1-B^2r^{2n}},\; |z|=r<1.
	\end{align*}
	For the special case $p\in P_n(\alpha)$, we get
	\begin{align*}
		\bigg|p(z)-\frac{1+(1-2\alpha)r^{2n}}{1-r^{2n}}\bigg|\leq\frac{2(1-\alpha)r^n}{1-r^{2n}},\; |z|=r<1.	\end{align*}
\end{lemA}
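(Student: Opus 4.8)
The plan is to realize the subordination $p\prec(1+Az)/(1+Bz)$ through its Schwarz function and then transport a disk forward under the associated M\"obius map. First I would record that, since $(1+Az)/(1+Bz)$ is univalent on $\mathbb{D}$, the subordination is equivalent to the existence of an analytic self-map $\omega\colon\mathbb{D}\to\mathbb{D}$ with $\omega(0)=0$ and
\[
p(z)=\frac{1+A\,\omega(z)}{1+B\,\omega(z)},\qquad z\in\mathbb{D}.
\]
Rearranging gives $p(z)-1=(A-B)\omega(z)/(1+B\omega(z))$, so the hypothesis $p(z)=1+c_nz^n+\cdots$ (with vanishing coefficients in degrees $1,\dots,n-1$) forces $\omega$ to have a zero of order at least $n$ at the origin.

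The second step bounds $\omega$. Writing $\omega(z)=z^n g(z)$ with $g$ analytic on $\mathbb{D}$, on the circle $|z|=r$ one has $|g(z)|=|\omega(z)|/r^n\le 1/r^n$; the maximum modulus principle together with the limit $r\to 1^-$ yields $|g(z)|\le 1$, that is, the generalized Schwarz lemma bound
\[
|\omega(z)|\le |z|^n=r^n,\qquad |z|=r<1.
\]

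The third step locates the image of the disk $\{|w|\le\rho\}$, with $\rho:=r^n$, under $w\mapsto(1+Aw)/(1+Bw)$. I would carry out the direct subtraction
\[
\frac{1+Aw}{1+Bw}-\frac{1-AB\rho^2}{1-B^2\rho^2}=\frac{(A-B)(w+B\rho^2)}{(1+Bw)(1-B^2\rho^2)},
\]
so that, using $A-B>0$ and $1-B^2\rho^2>0$, the claimed estimate reduces to $|w+B\rho^2|\le\rho\,|1+Bw|$. Squaring (here $A,B$ are real) and simplifying collapses this to $(|w|^2-\rho^2)(1-B^2\rho^2)\le 0$, which holds because $|w|\le\rho$ and $|B|\rho<1$. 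Setting $w=\omega(z)$ and $\rho=r^n$ then gives the first assertion.

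Finally, the special case is pure substitution: taking $A=1-2\alpha$ and $B=-1$ gives $1-AB\rho^2=1+(1-2\alpha)r^{2n}$, $1-B^2\rho^2=1-r^{2n}$, and $A-B=2(1-\alpha)$, reproducing the stated center and radius. I expect the only step demanding genuine care to be the second one, namely confirming that the coefficient gap really forces $\omega$ to vanish to order $n$ (rather than merely yielding the weaker bound $|\omega(z)|\le|z|$), since the sharp factor $r^n$ in the conclusion rests entirely on this; the remaining steps are routine M\"obius-image algebra.
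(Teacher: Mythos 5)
Your argument is correct and complete: the reduction of the subordination to a Schwarz function $\omega$ vanishing to order $n$ (via $p-1=(A-B)\omega/(1+B\omega)$ with the factor $(A-B)/(1+B\omega)$ nonvanishing at the origin), the generalized Schwarz bound $|\omega(z)|\le r^n$, and the M\"obius-image computation all check out, including the key identity $(|w|^2-\rho^2)(1-B^2\rho^2)\le 0$. The paper itself gives no proof---it imports the lemma verbatim from \cite[Lemma 2.1]{Ravi-Ron-Shan-CVEE-2007}---and your route is the standard one used there, so there is nothing further to reconcile.
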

We recall here a result by MacGregor that will be used later.
\begin{lemB}
	If $p\in {P}_n$. then 
	\begin{align*}
		\bigg|\frac{z p^{\prime}(z)}{p(z)}\bigg|\leq\frac{2n r^n}{1-r^{2n}}\; \mbox{for}\; |z|=r<1.
	\end{align*}
\end{lemB}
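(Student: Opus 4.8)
The plan is to use the subordination that defines $P_n$ to reduce the estimate to a sharp Schwarz-type bound on the associated Schwarz function. Since $p\in P_n$ means $p\prec(1+z)/(1-z)$ with $p(z)=1+c_nz^n+\cdots$, I would first write $p=(1+\omega)/(1-\omega)$, where $\omega$ is the Schwarz function with $\omega(0)=0$ and $|\omega(z)|<1$. Inverting gives $\omega=(p-1)/(p+1)=\tfrac{c_n}{2}z^n+\cdots$, so the normalization of $p$ forces $\omega$ to vanish to order at least $n$ at the origin. Logarithmic differentiation then yields the clean identity
\[
\frac{zp'(z)}{p(z)}=\frac{2z\omega'(z)}{1-\omega(z)^2},
\]
so the task becomes bounding $2|z\omega'|/|1-\omega^2|$, and for the denominator I would use the elementary $|1-\omega^2|\ge 1-|\omega|^2$.

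The core of the argument is a sharp bound on $|\omega'|$ that registers the high-order vanishing. First I would write $\omega(z)=z^n\psi(z)$; the Schwarz lemma applied to the bounded function $\psi$ gives $|\psi|\le1$ (equivalently $|\omega(z)|\le r^n$), and the Schwarz--Pick inequality for $\psi$ gives $|\psi'(z)|\le(1-|\psi|^2)/(1-r^2)$. Substituting these into $\omega'=nz^{n-1}\psi+z^n\psi'$ and putting $s=|\psi(z)|\in[0,1]$ (so that $|\omega|=r^ns$), the desired generalized Schwarz--Pick estimate
\[
|\omega'(z)|\le\frac{n\,r^{n-1}\,(1-|\omega(z)|^2)}{1-r^{2n}}
\]
reduces, after clearing denominators and dividing by $r^{n-1}$, to a quadratic inequality $F(s)\ge0$ on $[0,1]$.

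I expect this last reduction to be the main obstacle, but it turns out to be manageable. Because $F(1)=0$, the quadratic factors as $F(s)=(s-1)(As-C)$ with $A+C=n$, where $A=\frac{r}{1-r^2}-\frac{nr^{2n}}{1-r^{2n}}$ and $C=\frac{n}{1-r^{2n}}-\frac{r}{1-r^2}$. Since $s-1\le0$ on $[0,1]$, it suffices to check $As\le C$ there, which follows once $C\ge0$ and $C\ge A$. The bound $C\ge0$ is the easy endpoint $s=0$, equivalent to $r\sum_{k=0}^{n-1}r^{2k}<n$; the inequality $C\ge A$ is the genuine point, and after dividing by $1-r^2$ it becomes $n(1+r^{2n})\ge 2\sum_{k=0}^{n-1}r^{2k+1}$, which I would establish by the pairing $r^{2j}+r^{2(n-j)}\le1+r^{2n}$, i.e.\ $(1-r^{2j})(1-r^{2(n-j)})\ge0$, summed over $j$.

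Finally, combining the generalized Schwarz--Pick bound with $|1-\omega^2|\ge1-|\omega|^2$, the factor $1-|\omega|^2$ cancels and gives
\[
\left|\frac{zp'(z)}{p(z)}\right|=\frac{2|z|\,|\omega'(z)|}{|1-\omega(z)^2|}\le\frac{2r\cdot n r^{n-1}}{1-r^{2n}}=\frac{2nr^n}{1-r^{2n}},
\]
which is the assertion. The extremal choice $\omega(z)=z^n$, i.e.\ $p(z)=(1+z^n)/(1-z^n)$, turns every inequality above into an equality at the appropriate points, confirming that the constant $2nr^n/(1-r^{2n})$ cannot be improved.
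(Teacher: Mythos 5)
Your argument is correct. Note that the paper itself offers no proof of Lemma~B --- it is quoted as a known result of MacGregor and used as a black box --- so there is nothing internal to compare against; what you have written is a complete, self-contained derivation. The route is the natural one: the identity $zp'/p = 2z\omega'/(1-\omega^2)$ from the Herglotz/subordination representation, the observation that the normalization $p(z)=1+c_nz^n+\cdots$ forces $\omega$ to vanish to order $n$, and then the generalized Schwarz--Pick bound $|\omega'(z)|\le n r^{n-1}(1-|\omega(z)|^2)/(1-r^{2n})$, whose factor $1-|\omega|^2$ cancels against $|1-\omega^2|\ge 1-|\omega|^2$. I checked the quadratic reduction: with $A=\frac{r}{1-r^2}-\frac{nr^{2n}}{1-r^{2n}}$ and $C=\frac{n}{1-r^{2n}}-\frac{r}{1-r^2}$ one indeed has $A+C=n$, $F(s)=(s-1)(As-C)$, and the two inequalities $C\ge 0$ and $C\ge A$ suffice; both hold as you claim. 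One cosmetic slip: the inequality $n(1+r^{2n})\ge 2\sum_{k=0}^{n-1}r^{2k+1}$ involves odd exponents, so the pairing should be applied in the form $r^{a}+r^{2n-a}\le 1+r^{2n}$ with $a=2k+1$ (equivalently, your $j$ ranges over half-integers), or one first uses $2r^{2k+1}\le r^{2k}+r^{2k+2}$ and then pairs even exponents; either repair is immediate. The extremal function $p(z)=(1+z^n)/(1-z^n)$ confirming sharpness is also the right one.
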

\begin{thmA}\emph{(see \cite[Theorem 2.6]{Ravi-Ron-Shan-CVEE-2007})}
	The radius of convexity of order $\beta$, $0\leq\beta<1$, in $\mathcal{S}_0^{(n)}$ is 
	\begin{align*}
		R(\beta)=\bigg[\frac{(1+n)-\sqrt{n^2+2n+\beta^2}}{1+\beta}\bigg]^{1/n}
	\end{align*}
	and the radius of uniform convexity in $\mathcal{S}_0^{(n)}$ is $R_{UCV}=R(1/2)$. 
\end{thmA}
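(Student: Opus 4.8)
The plan is to transfer the problem from $f$ to the associated function $p(z):=zf'(z)/f(z)$, which lies in $P_n$ precisely when $f\in\mathcal{S}_0^{(n)}$, so that the two estimates in Lemmas A and B become directly applicable. The bridge is the logarithmic-derivative identity obtained by differentiating $\log p(z)=\log(zf'(z))-\log f(z)$, namely
\begin{align*}
1+\frac{zf''(z)}{f'(z)}=p(z)+\frac{zp'(z)}{p(z)}.
\end{align*}
This rewrites the convexity quantity $w:=1+zf''/f'$ entirely in terms of $p$ and $zp'/p$, each of which is controlled by one lemma.

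First I would confine $w$ to a disk. The special case $\alpha=0$ of Lemma A places $p(z)$ in the disk centred at $a:=(1+r^{2n})/(1-r^{2n})$ with radius $2r^n/(1-r^{2n})$, and Lemma B places $zp'(z)/p(z)$ in the disk centred at $0$ with radius $2nr^n/(1-r^{2n})$. Since $w=p+(zp'/p)$, the Minkowski sum gives $|w-a|\le\rho$ with
\begin{align*}
a=\frac{1+r^{2n}}{1-r^{2n}},\qquad \rho=\frac{2(1+n)r^n}{1-r^{2n}},
\end{align*}
so ${\rm Re}\,w\ge a-\rho=(1-2(1+n)r^n+r^{2n})/(1-r^{2n})$. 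Imposing $a-\rho\ge\beta$ and setting $x=r^n$ turns the requirement into $(1+\beta)x^2-2(1+n)x+(1-\beta)\ge0$, whose smaller root is $x_0=((1+n)-\sqrt{n^2+2n+\beta^2})/(1+\beta)\in(0,1)$; the inequality holds exactly for $x\le x_0$, i.e. $r\le R(\beta)=x_0^{1/n}$. Sharpness comes from the extremal function $k_n(z)=z(1-z^n)^{-2/n}$, for which $zk_n'/k_n=(1+z^n)/(1-z^n)$; a short computation gives $w=(1-z^n)^{-1}(1+z^n)+2nz^n/(1-z^{2n})$, and at $z=R(\beta)\,e^{i\pi/n}$ this is real and equals exactly $\beta$, so $R(\beta)$ cannot be enlarged.

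For the uniform-convexity statement I would invoke the characterization $f\in UCV\iff{\rm Re}\,w>|w-1|$, i.e. $w$ lies in the parabolic region $\Omega$ bounded by ${\rm Re}\,w=(1+({\rm Im}\,w)^2)/2$, whose vertex is $1/2$. Since $\Omega$ is convex, the disk $|w-a|\le\rho$ lies in $\Omega$ iff its boundary circle does, and I would minimize $g(\theta)={\rm Re}\,w-|w-1|$ over $w=a+\rho e^{i\theta}$. The critical-point analysis shows the minimizer falls at the leftmost real point $w=a-\rho$ as soon as $\rho>2(a-1)$; there $g=2(a-\rho)-1$ (using $a-\rho<1$), so the containment collapses to exactly $a-\rho\ge1/2$, which is convexity of order $1/2$. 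Hence $R_{UCV}=R(1/2)$, with the same $k_n$ giving sharpness: at $z=R(1/2)\,e^{i\pi/n}$ one has $w=1/2$, the vertex, so the extremal disk is internally tangent to $\Omega$ there.

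The main obstacle is this last reduction: one must guarantee that the parabolic containment really does reduce to the vertex condition, which requires the regime inequality $\rho>2(a-1)$, equivalently $a<3/2$, to hold throughout $0<r\le R(1/2)$. Because $a=(1+x^2)/(1-x^2)$ increases in $x=r^n$, it suffices to verify that $x_0=r^n$ at $r=R(1/2)$ satisfies $x_0<1/\sqrt5$ (equivalently $a<3/2$), which follows directly from the closed form of $R(1/2)$ for every $n\in\mathbb{N}$; indeed the bound degenerates to the vertex case only as the disk touches $\Omega$ precisely at $1/2$. Once this regime check is in place, everything else is the routine quadratic bookkeeping indicated above.
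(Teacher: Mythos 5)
The paper does not actually prove this statement: Theorem A is quoted verbatim from \cite[Theorem 2.6]{Ravi-Ron-Shan-CVEE-2007} and used only as background, so there is no internal proof to compare against. Your reconstruction is the standard route of the cited source, and it is exactly the machinery the paper itself deploys to derive \eqref{Eq-2.1} in its own Theorem 2.1: the identity $1+zf''(z)/f'(z)=p(z)+zp'(z)/p(z)$ with $p=zf'/f\in P_n$, the disk estimate of Lemma A together with the bound on $zp'/p$ from Lemma B, the resulting lower bound $\mathrm{Re}\bigl(1+zf''/f'\bigr)\ge\bigl(1-2(1+n)r^{n}+r^{2n}\bigr)/(1-r^{2n})$, the quadratic $(1+\beta)x^{2}-2(1+n)x+(1-\beta)\ge0$ in $x=r^{n}$ whose smaller root gives $R(\beta)$, and sharpness via $k_n(z)=z(1-z^n)^{-2/n}$ evaluated at $z=re^{i\pi/n}$. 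All of that checks out.

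One small repair in the $UCV$ step. The inequality $\rho>2(a-1)$ — which is what your minimization of $\mathrm{Re}\,w-|w-1|$ over the circle $w=a+\rho e^{i\theta}$ genuinely requires for the minimum to occur at the leftmost point $w=a-\rho$ — is \emph{not} equivalent to $a<3/2$. With $a=(1+x^{2})/(1-x^{2})$ and $\rho=2(1+n)x/(1-x^{2})$ it reads $2(1+n)x>4x^{2}$, i.e.\ $x<(1+n)/2$, which holds automatically for every $x\in(0,1)$ and every $n\ge1$; no numerical verification such as $x_0<1/\sqrt5$ is needed. (The condition $a<3/2$ is instead what guarantees that the point of the parabola nearest to the center $a$ is the vertex $1/2$ — a different but equally workable reduction.) Either way the containment of the disk in the parabolic region collapses to $a-\rho\ge1/2$, i.e.\ convexity of order $1/2$, so $R_{UCV}=R(1/2)$ with sharpness again furnished by $k_n$. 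Apart from this mislabelled equivalence the argument is sound.
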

We determine the radius of concavity for the class $\mathcal{S}^{(n)}_0$ and proved the result. 
\begin{theorem}
	If $f\in\mathcal{S}^{(n)}_0$,  then $\rm{Re}\left(T_f(z)\right)>0$ for $|z|< \mathrm{R_{n,{Co(A)}}}$ , where $\mathrm{R_{n,{Co(A)}}}$ is the least value of $r\in(0,1)$ satisfying $\Phi_1(r)=0$ with 
	\begin{align*}
		\Phi_1(r):=\frac{A+1}{2}\left(\frac{1-r}{1+r}\right)-\frac{1+2(n+1)r^n+r^{2n}}{1-r^{2n}},\; |z|=r<1.
	\end{align*}
	The radius $\mathrm{R_{n,{Co(A)}}}$ is sharp. 
\end{theorem}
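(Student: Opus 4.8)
The plan is to reduce the positivity of $\mathrm{Re}\,T_f$ to a comparison between the Möbius factor $\tfrac{1+z}{1-z}$ and the quantity $q(z):=1+zf''(z)/f'(z)$. Writing $zf''/f'=q-1$ in \eqref{Eq-1.3} gives
\begin{align*}
T_f(z)=\frac{2}{A-1}\left(\frac{A+1}{2}\,\frac{1+z}{1-z}-q(z)\right),
\end{align*}
so that, since $A>1$, the inequality $\mathrm{Re}\,T_f(z)>0$ is equivalent to $\tfrac{A+1}{2}\,\mathrm{Re}\!\left(\tfrac{1+z}{1-z}\right)>\mathrm{Re}\,q(z)$. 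First I would record the elementary sharp bound $\mathrm{Re}\!\left(\tfrac{1+z}{1-z}\right)\ge \tfrac{1-r}{1+r}$ on $|z|=r$, with equality at $z=-r$.

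Next, for $f\in\mathcal S_0^{(n)}$ I set $p(z):=zf'(z)/f(z)\in P_n$ and obtain, by logarithmic differentiation of $p$, the identity $q(z)=p(z)+zp'(z)/p(z)$. The key estimate is then an upper bound for $\mathrm{Re}\,q$. Using $\mathrm{Re}\,q\le \mathrm{Re}\,p+|zp'/p|$ together with the special case $\alpha=0$ of Lemma A, which yields $\mathrm{Re}\,p(z)\le \tfrac{1+r^n}{1-r^n}=\tfrac{1+2r^n+r^{2n}}{1-r^{2n}}$, and Lemma B, which yields $|zp'(z)/p(z)|\le \tfrac{2nr^n}{1-r^{2n}}$, I would add the two estimates to obtain
\begin{align*}
\mathrm{Re}\,q(z)\le \frac{1+2(n+1)r^n+r^{2n}}{1-r^{2n}}.
\end{align*}
Combining this with the Möbius bound gives $\tfrac{A-1}{2}\,\mathrm{Re}\,T_f(z)\ge \Phi_1(r)$ for every $|z|=r$.

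It then remains to analyse $\Phi_1$. Since $\Phi_1(0)=\tfrac{A+1}{2}-1=\tfrac{A-1}{2}>0$ while $\Phi_1(r)\to-\infty$ as $r\to 1^-$ (the subtracted term blows up and the first stays bounded), continuity produces a least zero $\mathrm{R_{n, Co(A)}}\in(0,1)$, and $\Phi_1>0$ on $(0,\mathrm{R_{n, Co(A)}})$; hence $\mathrm{Re}\,T_f(z)>0$ there, which shows the radius is at least $\mathrm{R_{n, Co(A)}}$.

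For sharpness I would take the generalized Koebe function $K(z)=z/(1-z^n)^{2/n}$, for which $p(z)=\tfrac{1+z^n}{1-z^n}$ and a direct computation gives $q(z)=\tfrac{1+z^n}{1-z^n}+\tfrac{2nz^n}{1-z^{2n}}$. The delicate point — and the main obstacle — is that the two estimates above are extremal at different arguments: $\mathrm{Re}\!\left(\tfrac{1+z}{1-z}\right)$ is least at $z=-r$, whereas $\mathrm{Re}\,q$ is largest where $z^n=r^n$ is real and positive. I would reconcile them using the rotation-invariance of $\mathcal S_0^{(n)}$: taking $f_\gamma(z)=e^{-i\gamma}K(e^{i\gamma}z)\in\mathcal S_0^{(n)}$ with $e^{in\gamma}=(-1)^n$ forces $(e^{i\gamma}(-r))^n=r^n$, so that at the single point $z=-r$ both bounds are simultaneously attained and $q_\gamma(-r)=q(e^{i\gamma}(-r))$ is real and equal to $\tfrac{1+2(n+1)r^n+r^{2n}}{1-r^{2n}}$. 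Consequently $\tfrac{A-1}{2}\,\mathrm{Re}\,T_{f_\gamma}(-r)=\Phi_1(r)$, which vanishes exactly at $r=\mathrm{R_{n, Co(A)}}$; this shows the radius cannot be enlarged and completes the argument. (For even $n$ one may simply take $\gamma=0$, i.e.\ $K$ itself.)
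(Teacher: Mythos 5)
Your derivation of the lower bound for $\mathrm{Re}\,T_f$ is essentially the paper's: the same identity $1+zf''/f'=p+zp'/p$ with $p=zf'/f\in P_n$, the same use of Lemmas A and B, the same combination yielding $\tfrac{A-1}{2}\,\mathrm{Re}\,T_f(z)\ge\Phi_1(r)$, and the same IVT argument for the existence of the least root (your value $\Phi_1(0)=\tfrac{A-1}{2}$ is the correct one; the paper's ``$A-1$'' is a harmless slip). Where you genuinely diverge is in the sharpness argument, and your version is the more careful one. The paper takes $f_0(z)=z/(1-z^n)^{2/n}$ and asserts $\mathrm{Re}\,T_{f_0}(-r)<0$ for $r$ beyond the critical radius, but at $z=-r$ the quantity $1+zf_0''/f_0'$ does not attain the upper bound $\tfrac{1+2(n+1)r^n+r^{2n}}{1-r^{2n}}$ unless $(-r)^n=r^n$; for $n=1$, for instance, $1+zf_0''/f_0'$ at $z=-r$ equals $\tfrac{1-4r+r^2}{1-r^2}\to-\infty$, so $\mathrm{Re}\,T_{f_0}(-r)\to+\infty$ and the paper's displayed check fails (the figure presumably shows even $n$ or a different ray). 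Your observation that the two estimates are extremal at different arguments, and your remedy of rotating the generalized Koebe function by $e^{i\gamma}$ with $e^{in\gamma}=(-1)^n$ so that both equalities occur simultaneously at $z=-r$ (legitimate, since $\mathcal{S}_0^{(n)}$ is closed under such rotations and $f_\gamma(z)=z+\tfrac{2}{n}e^{in\gamma}z^{n+1}+\cdots$ remains normalized), gives $\tfrac{A-1}{2}\,\mathrm{Re}\,T_{f_\gamma}(-r)=\Phi_1(r)$ exactly and hence a genuinely complete proof that $\mathrm{R_{n,{Co(A)}}}$ cannot be enlarged. In short: same route to the radius, but your sharpness step repairs a real gap in the paper's.
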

\begin{proof}
	Since $f\in\mathcal{S}_0^{(n)}$, we have $p(z)=zf^{\prime}(z)/f(z)\in P_n$. Then in view of Lemma A, we have 
	\begin{align*}
		\bigg|\frac{z f^{\prime}(z)}{f(z)}-\frac{1+r^{2n}}{1-r^{2n}}\bigg|\leq \frac{2 r^n}{1-r^{2n}}\; \mbox{for}\; |z|=r<1.
	\end{align*}
	Further from Lemma B, we have 
	\begin{align*}
		\bigg|\frac{z p^{\prime}(z)}{p(z)}\bigg|\leq\frac{2n r^n}{1-r^{2n}}\; \mbox{for}\; |z|=r<1.
	\end{align*}
	
\noindent A simple computation shows that
	\begin{align*}
	1+\frac{z f^{\prime}(z)}{f(z)}=p(z)+\frac{z p^{\prime}(z)}{p(z)},
	\end{align*}
	hence, 
	\begin{align}\label{Eq-2.1}
	\bigg|\frac{zf^{\prime\prime}(z)} {f^{\prime}(z)}-\frac{2 r^{2n}}{1-r^{2n}}\bigg|\leq\frac{2(n+1)r^n}{1-r^{2n}}.
	\end{align}
	From \eqref{Eq-2.1}, it follows that
	\begin{align}\label{Eq-2.2}
		{\rm Re}\left(1+\frac{zf^{\prime\prime}(z)} {f^{\prime}(z)}\right)&\leq 1+\frac{2(n+1)r^n+r^{2n}}{1-r^{2n}},\;\;\;|z|=r<1.\\&\nonumber=\frac{1+2(n+1)r^n+r^{2n}}{1-r^{2n}}.
	\end{align}
	In view of the inequality \eqref{Eq-2.2}, we see that
		\begin{align*}
			{\rm Re}\left(T_{f(z)}\right)\geq \frac{2}{A-1}\left[\frac{A+1}{2}\left(\frac{1-r}{1+r}\right)-\frac{1+2(n+1)r^n+r^{2n}}{1-r^{2n}}\right]\; \mbox{for}\;|z|=r<1.
		\end{align*}
		The right hand side of the above inequality is strictly positive if $|z|< \mathrm{R_{n,{Co(A)}}}$ , where  $\mathrm{R_{n,{Co(A)}}}$  is given in the statement of the theorem. We now investigate the existence of  $\mathrm{R_{n,{Co(A)}}}$  for each $A\in(1,2]$.\vspace{1.2mm}

	We see that the function $\Phi_1(r)$ which is defined in the statement of the theorem is continuous on $[0,1]$ with the properties
		\begin{align*}
		\Phi_1(0)=A-1>0\;\;\;\mbox{and}\;\;\; \lim_{r\rightarrow {1}^{-}}\Phi_1(r)=-\infty.
		\end{align*}
		By the Intermediate Value Theorem (IVT), $\Phi_1(r)$ has at least one root in $(0,1)$.  Hence, 	${\rm Re}\left(T_{f(z)}\right)>0$ if $|z|=r< \mathrm{R_{n,{Co(A)}}}$  exists for every $A\in(1,2]$. For $n\in\mathbb{N}$, if we consider the function
		\begin{align}
			f_0(z)=\frac{z}{(1-z^n)^{\frac{2}{n}}},\; z\in\mathbb{D},
		\end{align} then we see that
		\begin{align*}
			T_{f_0}(z)=\frac{2}{A-1}\left[\frac{A+1}{2}\left(\frac{1-z}{1+z}\right)-1-\frac{(2n+2)z^n+(1-n+\frac{2}{n})z^{2n}}{(1-z^{2n})}\right].
		\end{align*} 
		We observe that, if $z=-r$ and $\mathrm{R_{n,Co(A)}}<|z|<1$, then ${\rm Re}\;T_{f_0}(z)<0$ (see Figure \ref{fig-1} for some particular cases). This proves the sharpness of the radius of concavity.
\end{proof}
\section{\bf Radius of concavity for functions in the class $\mathcal{K(\alpha,\beta)}$}
Many classes of functions studied in geometric function theory can be unified through the so-called Kaplan classes. We give a brief introduction here to this topic, following the article \cite{Ruscheweyh-1982}. Let $\mathcal{A}_0$ be the class of functions $g$ with the property that $zg\in\mathcal{A}$. We define $\mathcal{H}$ to be the class of functions $f\in\mathcal{A}_0$ such that 
\begin{align*}
	{\rm Re}\; e^{i\delta} f(z)>0\; \mbox{for}\; z\in\mathbb{D}
\end{align*}
for some $\delta\in\mathbb{R}$.\vspace{1.2mm} 

For $\alpha>0$, we define the class
\begin{align*}
	\mathcal{H}^{\alpha}=\{f^{\alpha}\in\mathcal{A}_0 : f\in\mathcal{H}\}.
\end{align*}
Let $\mathcal{K}(0, \gamma)$, $\gamma\geq 0$, be the class of functions $g\in\mathcal{A}_0$ with ${\rm Re} \left(zg^{\prime}/g(z)\right)\geq -\gamma/2$ for $z\in\mathbb{D}$. Define (see \cite[p. 32]{Ruscheweyh-1982})
\begin{align*}
	\mathcal{K}(\alpha, \beta)=\mathcal{H}^{\alpha}\cdot K(0, \beta-\alpha)\; \mbox{if}\; 0\leq\alpha\leq \beta
\end{align*}
and 
\begin{align*}
	\mathcal{K}(\alpha, \beta)=\bigg\{g : g(z)=\frac{1}{f(z)},\; f\in\mathcal{K}(\beta, \alpha)\bigg\}\; \mbox{if}\; 0\leq\beta\leq\alpha.
\end{align*}
The classes $\mathcal{K}(\alpha, \beta)$ are called Kaplan classes of type $\alpha$ and $\beta$. The name is due the fact that the close-to-convex functions, $\mathcal{K}$, introduced by Kaplan in \cite{Kaplan-MMJ-1952}, have the property that $f\in\mathcal{K}\Leftrightarrow f^{\prime}\in \mathcal{K}(1, 3)$. It is worth pointing out that the definition of $\mathcal{K}(0, \gamma)$ implies that \begin{align*}
	g\in\mathcal{S}_{\alpha}\Leftrightarrow \frac{g}{z}\in\mathcal{K}(0, 2-2\alpha).
\end{align*}
Further, it can be shown (see \cite{Ruscheweyh-1982}) that 
\begin{align*}
	\mathcal{K}(\alpha, 0)\cdot\mathcal{K}(0, \beta)\subset \mathcal{K}(\alpha, \beta).
\end{align*}
We obtain the result finding the radius of concavity of the class $\mathcal{K(\alpha,\beta)}$. 
\begin{theorem}
	If $f \in\mathcal{K(\alpha,\beta)}$, $0\leq\alpha\leq\beta$ then ${\rm Re}\left(T_f(z)\right)>0$ for $|z|<\mathrm{R_{\alpha,\beta,{Co(A)}}}$, where $\mathrm{R_{\alpha,\beta,{Co(A)}}}$ is the least value of $r\in(0,1)$ satisfying $\Phi_2(r)=0$ with
	\begin{align*}
	\Phi_2(r)=(A+3+2\alpha-2\beta)r^2-2(A+1+\alpha+\beta)r+A-1.
	\end{align*}
	The radius $\mathrm{R_{\alpha,\beta,{Co(A)}}}$ is sharp.
\end{theorem}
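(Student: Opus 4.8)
The plan is to reduce the sign condition $\mathrm{Re}\,(T_f(z))>0$ to a single sharp upper estimate for $\mathrm{Re}\,(1+zf''(z)/f'(z))$ on the circle $|z|=r$. Since $A>1$, formula \eqref{Eq-1.3} shows that $\mathrm{Re}\,(T_f(z))>0$ is equivalent to
\begin{align*}
\frac{A+1}{2}\,{\rm Re}\left(\frac{1+z}{1-z}\right) > {\rm Re}\left(1+\frac{zf''(z)}{f'(z)}\right).
\end{align*}
Using the elementary bound ${\rm Re}\left((1+z)/(1-z)\right)\geq (1-r)/(1+r)$ for $|z|=r$ (equality at $z=-r$), it suffices to produce the best possible upper bound for $\mathrm{Re}\,(1+zf''/f')$ and to verify that the resulting difference remains positive exactly up to the first zero of $\Phi_2$.

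The heart of the argument is the Kaplan factorization. Since $0\leq\alpha\leq\beta$ and $\mathcal{K}(\alpha,\beta)=\mathcal{H}^{\alpha}\cdot K(0,\beta-\alpha)$, I write $f'=h^{\alpha}k$ with $h\in\mathcal{H}$ and $k\in K(0,\beta-\alpha)$, so that
\begin{align*}
\frac{zf''(z)}{f'(z)}=\alpha\,\frac{zh'(z)}{h(z)}+\frac{zk'(z)}{k(z)}.
\end{align*}
For $h\in\mathcal{H}$ the function $e^{i\delta}h$ maps $\mathbb{D}$ into a half-plane, and the Schwarz--Pick estimate $|P'|\leq 2\,{\rm Re}(P)/(1-|z|^2)$ gives $|zh'/h|\leq 2r/(1-r^2)$. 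For $k\in K(0,\beta-\alpha)$, the condition ${\rm Re}(zk'/k)\geq -(\beta-\alpha)/2$ together with $(zk'/k)(0)=0$ shows that $p:=1+\tfrac{2}{\beta-\alpha}\,zk'/k$ is a Carath\'{e}odory function, so the disk estimate $|p-(1+r^2)/(1-r^2)|\leq 2r/(1-r^2)$ confines $zk'/k$ to the disk centred at $(\beta-\alpha)r^2/(1-r^2)$ of radius $(\beta-\alpha)r/(1-r^2)$. Adding the two contributions yields
\begin{align*}
{\rm Re}\left(1+\frac{zf''(z)}{f'(z)}\right)\leq \frac{1+(\alpha+\beta)r+(\beta-\alpha-1)r^2}{1-r^2}.
\end{align*}

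Substituting this bound together with ${\rm Re}\left((1+z)/(1-z)\right)\geq(1-r)/(1+r)$ into the equivalence of the first paragraph, and then clearing the positive factor $(1-r)(1+r)$, reduces the desired inequality to $\Phi_2(r)>0$ with $\Phi_2$ exactly the quadratic in the statement. The existence of the least root $\mathrm{R}_{\alpha,\beta,\mathrm{Co}(A)}\in(0,1)$ then follows as in Theorem~1 by the Intermediate Value Theorem, since $\Phi_2(0)=A-1>0$ while $\Phi_2(1)=-4\beta\leq 0$. For sharpness I would take the extremal function $f_0$ determined by $f_0'(z)=(1-z)^{\alpha}(1+z)^{-\beta}$, for which the factors $h(z)=(1-z)/(1+z)$ and $k(z)=(1+z)^{-(\beta-\alpha)}$ turn every estimate above into an equality at $z=-r$; evaluating there gives $\mathrm{Re}\,(T_{f_0}(z))=0$ at $r=\mathrm{R}_{\alpha,\beta,\mathrm{Co}(A)}$ and $\mathrm{Re}\,(T_{f_0}(z))<0$ just beyond it, so the radius cannot be enlarged.

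The step I expect to be the main obstacle is the \emph{simultaneous} sharpness. The lower bound for ${\rm Re}\left((1+z)/(1-z)\right)$ is attained only at $z=-r$, so the two one-sided Kaplan estimates for $zh'/h$ and $zk'/k$ must be arranged to be tight at the \emph{same} point $z=-r$ rather than at $z=r$. This dictates the choices $h(z)=(1-z)/(1+z)$ (so that $zh'/h$ has maximal real part at $z=-r$) and the matching $k$, and it is precisely what forces the extremal to take the form $f_0'(z)=(1-z)^{\alpha}(1+z)^{-\beta}$; checking that this single function realises all intermediate inequalities as equalities at $z=-r$ is the delicate point of the proof.
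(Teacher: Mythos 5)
Your argument is correct and arrives at exactly the same central estimate as the paper, namely $\mathrm{Re}\left(1+zf''(z)/f'(z)\right)\leq\left(1+(\alpha+\beta)r+(\beta-\alpha-1)r^2\right)/(1-r^2)$, from which the quadratic $\Phi_2$ follows by the same algebra; but you reach it by a genuinely different route. The paper simply imports this bound as a ready-made disk inequality from the proof of Theorem 3.2 of Ravichandran--Ronning--Shanmugam, whereas you re-derive it from the Kaplan factorization $f'=h^{\alpha}k$, estimating $|zh'(z)/h(z)|\leq 2r/(1-r^2)$ by Schwarz--Pick for half-plane maps and confining $zk'(z)/k(z)$ to a Carath\'eodory-type disk; this makes the proof self-contained, at the cost of making explicit the reading $f'\in\mathcal{K}(\alpha,\beta)$ (which is also what the cited source and the paper's own computation actually use, despite the statement saying $f\in\mathcal{K}(\alpha,\beta)$). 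The larger divergence is in the sharpness step: the paper takes $f_0(z)=z/(1-z)^{\beta-\alpha}$ and defers the sign check to a figure, while you construct $f_0'(z)=(1-z)^{\alpha}(1+z)^{-\beta}$ precisely so that both factor estimates and the bound $\mathrm{Re}\left((1+z)/(1-z)\right)\geq(1-r)/(1+r)$ are attained simultaneously at $z=-r$; a direct computation then gives $\mathrm{Re}\,T_{f_0}(-r)=\Phi_2(r)/\left((A-1)(1-r^2)\right)$, which changes sign exactly at the stated radius. Your extremal is the better choice here, since the paper's $z/(1-z)^{\beta-\alpha}$ has its defining estimates tight at $z=+r$ rather than at $z=-r$, so your version actually closes the sharpness argument that the paper leaves to a picture. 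The only loose ends on your side are minor and shared with the paper: $\Phi_2(1)=-4\beta$ is only $\leq 0$, so the degenerate case $\beta=0$ needs a separate remark, and your Carath\'eodory function satisfies $\mathrm{Re}\,p\geq 0$ rather than $>0$, which does not affect the closed disk estimate.
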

	\begin{proof}
		In view of \cite[Proof of Theorem 3.2, p. 276]{Ravi-Ron-Shan-CVEE-2007}, we have
		\begin{align*}
			\bigg|1+\frac{z f^{\prime\prime}(z)}{f^{\prime}(z)}-\frac{1+(\beta-\alpha-1)r^2}{1-r^2}\bigg|\leq\frac{(\alpha+\beta)r}{1-r^2},
		\end{align*}
		\vspace{1.2mm}

		which yields that
		\begin{align}\label{Eq-3.1}
			{\rm Re}\left(1+\frac{zf^{\prime\prime}(z)} {f^{\prime}(z)}\right)\leq\frac{1+(\alpha+\beta)r+(\beta-\alpha-1)r^2}{1-r^2}.
		\end{align}
			Thus, by the inequality \eqref{Eq-3.1}, for $0\leq\alpha\leq\beta$, we get
		\begin{align*}
			{\rm Re}\;\left(T_{f(z)}\right)&\geq \frac{2}{A-1}\left[\frac{A+1}{2}\frac{1-r}{1+r}-\frac{1+(\alpha+\beta)r+(\beta-\alpha-1)r^2}{1-r^2}.\right]\\&=\frac{1}{(A-1)(1-r^2)}\left[(A+3+2\alpha-2\beta)r^2-2(A+1+\alpha+\beta)r+A-1\right]\\&=\frac{1}{(A-1)(1-r^2)}\Phi_2(r),
		\end{align*}
		where $\Phi_2$ is given in the statement of theorem.\vspace{2mm}
			
		The right hand side of the above inequality is strictly positive if $|z|<\mathrm{R_{\alpha,\beta,{Co(A)}}}$, where  $\mathrm{R_{\alpha,\beta,{Co(A)}}}$ is given in the statement of the theorem. We now investigate the existence of the root $\mathrm{R_{\alpha,\beta,{Co(A)}}}\in (0, 1)$ for each $A\in(1,2]$.\vspace{1.2mm} 
		
		We see that the function $\Phi_2(r)$ which is defined in the statement of the theorem is continuous on $[0,1]$ with
		\begin{align*}
			\Phi_2(r)(0)=A-1>0\;\;\;\mbox{and}\;\;\;\Phi_2(r)(1)=-4\beta<0;\;\;\mbox{for}\;\beta>0.
		\end{align*}

		By the IVT, $\Phi_2(r)$ has at least one root in $(0,1)$. \\Hence, 	${\rm Re}\left(T_{f(z)}\right)>0$ if $|z|=r<\mathrm{R_{\alpha,\beta,{Co(A)}}}$ exists for every $A\in(1,2]$.Also, if we consider $f_0(z)=\frac{z}{(1-z)^{\beta-\alpha}}$, $z\in\mathbb{D}$, then for this function we compute
		\begin{align*}
			T_{f_0}(z)=\frac{2}{A-1}\left[\frac{A+1}{2}\frac{1-z}{1+z}-1+\frac{2(\alpha-\beta)}{(1-z)^{1-\alpha+\beta}}-\frac{(-1+\alpha-\beta)(\alpha-\beta)z}{(1-z)^{2-\alpha+\beta}}\right].
		\end{align*} 
		We observe that, if $z=-r$ and $\mathrm{R_{\alpha,\beta,{Co(A)}}}<|z|<1$, then ${\rm Re}T_{f_0}(z)<0$ (see figure). This proves the sharpness of the radius of concavity. This completes the proof.
	\end{proof}
\section{\bf Radius of concavity for functions in the class $\mathcal{\tilde{S^*}(\beta)}$}
A function $f\in\mathcal{A}$ is said to be strongly starlike of order $\beta$ in $\mathbb{D}$ if it satisfies the relation
\begin{align*}
	\frac{ zf^{\prime}(z)}{f(z)}\prec \left(\frac{1+z}{1-z}\right)^{\beta}
\end{align*}
for some real $\beta$ $(0\leq \beta\leq 1)$. We denote this class by $ \mathcal{\tilde{S^*}(\beta)} $. Note that $\mathcal{\tilde{S^*}}(1)=\mathcal{S}^{*}$. \vspace{1.2mm}

For the class $\mathcal{\tilde{S^*}(\beta)}$, we determine the radius of concavity and prove the following result.
\begin{theorem}
	If $f \in\mathcal{\tilde{S^*}(\beta)}$, then ${\rm Re}\left(T_f(z)\right)>0$ for $|z|<\mathrm{R_{\beta,{Co(A)}}}$, where $\mathrm{R_{\beta,{Co(A)}}}$ is the least value of $r\in(0,1)$ satisfying $\Phi_3(r)=0$ with 
	\begin{align*}
		\Phi_3(r)=(A+1)\frac{1-r}{1+r}-\frac{4r\beta}{1-r^2}-2\left(\frac{1+r}{1-r}\right)^{\beta}.
	\end{align*}
	The radius $\mathrm{R_{\beta,{Co(A)}}}$ is sharp.
\end{theorem}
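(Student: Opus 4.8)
The plan is to proceed exactly as in the proofs of the two preceding theorems: reduce the problem to an upper estimate for ${\rm Re}\left(1 + zf''(z)/f'(z)\right)$ and feed it into the definition \eqref{Eq-1.3} of $T_f$. First I would set $p(z) = zf'(z)/f(z)$. By the definition of $\mathcal{\tilde{S^*}(\beta)}$, we have $p \prec P$ with $P(z) = \left(\frac{1+z}{1-z}\right)^{\beta}$, so there is a Schwarz function $\omega$ (with $\omega(0)=0$ and $|\omega(z)|\le |z|$) such that $p(z) = P(\omega(z))$. Logarithmic differentiation of $p$ yields the identity
$$1 + \frac{zf''(z)}{f'(z)} = p(z) + \frac{zp'(z)}{p(z)},$$
which is precisely the identity already exploited in the proof of the first theorem.

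Next I would establish the two estimates, valid for $|z| = r < 1$:
$$|p(z)| \le \left(\frac{1+r}{1-r}\right)^{\beta}, \qquad \left|\frac{zp'(z)}{p(z)}\right| \le \frac{2\beta r}{1-r^2}.$$
The first follows from subordination: since $P$ is univalent and $|\omega(z)|\le r$, the value $p(z)$ lies in $P(\overline{\mathbb{D}_r})$, whose maximal modulus is attained at the boundary point $w=r$. For the second, a direct computation gives $\frac{zp'(z)}{p(z)} = \frac{2\beta\, z\omega'(z)}{1-\omega(z)^2}$, and combining the Schwarz--Pick inequality $|z\omega'(z)| \le r\,(1-|\omega(z)|^2)/(1-r^2)$ with the elementary bound $|1-\omega(z)^2| \ge 1-|\omega(z)|^2$ produces the stated estimate. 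I expect this derivative bound to be the main obstacle, since it is the step where the Schwarz--Pick machinery and the exact cancellation of the factor $1-|\omega(z)|^2$ are essential; everything afterward is routine.

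Combining the two estimates through ${\rm Re}\left(p + zp'/p\right) \le |p| + |zp'/p|$ gives
$${\rm Re}\left(1 + \frac{zf''(z)}{f'(z)}\right) \le \left(\frac{1+r}{1-r}\right)^{\beta} + \frac{2\beta r}{1-r^2}.$$
Using ${\rm Re}\frac{1+z}{1-z} \ge \frac{1-r}{1+r}$ for $|z|=r$ together with $A-1>0$ (as $A\in(1,2]$) and substituting into \eqref{Eq-1.3}, I would obtain ${\rm Re}\left(T_f(z)\right) \ge \frac{1}{A-1}\Phi_3(r)$, which is positive exactly when $\Phi_3(r)>0$. The existence of the radius then follows from the Intermediate Value Theorem: $\Phi_3$ is continuous on $[0,1)$ with $\Phi_3(0) = A-1 > 0$, while $\Phi_3(r)\to -\infty$ as $r\to 1^-$ for $\beta>0$, so $\Phi_3$ has a least root $\mathrm{R_{\beta,{Co(A)}}}\in(0,1)$, and ${\rm Re}(T_f)>0$ on $|z|<\mathrm{R_{\beta,{Co(A)}}}$.

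Finally, for sharpness I would exhibit the extremal function $f_0$ determined by $zf_0'(z)/f_0(z) = \left(\frac{1-z}{1+z}\right)^{\beta}$, which belongs to $\mathcal{\tilde{S^*}(\beta)}$ via the admissible choice $\omega(z)=-z$. A short computation gives
$$T_{f_0}(z) = \frac{2}{A-1}\left[\frac{A+1}{2}\frac{1+z}{1-z} - \left(\frac{1-z}{1+z}\right)^{\beta} + \frac{2\beta z}{1-z^2}\right],$$
and evaluating at $z=-r$ collapses this to precisely $T_{f_0}(-r) = \frac{1}{A-1}\Phi_3(r)$. The point of this choice is that all three estimates above become simultaneous equalities at $z=-r$: there $p(-r)=\left(\frac{1+r}{1-r}\right)^{\beta}$ attains the modulus bound with real positive value, and $\frac{zp'(z)}{p(z)}$ attains $\frac{2\beta r}{1-r^2}$ as a positive real number, while ${\rm Re}\frac{1+z}{1-z}$ takes its minimum $\frac{1-r}{1+r}$. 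Consequently $T_{f_0}(-r)<0$ for every $r$ with $\mathrm{R_{\beta,{Co(A)}}}<r<1$, which shows the radius cannot be enlarged and completes the proof.
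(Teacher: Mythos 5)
Your main estimate is correct and follows essentially the same route as the paper: write $zf'(z)/f(z)=P(\omega(z))$ with $P(z)=\left(\frac{1+z}{1-z}\right)^{\beta}$ and $\omega$ a Schwarz function, use the logarithmic-derivative identity $1+zf''/f'=p+zp'/p$, bound $|p(z)|$ by subordination and $|zp'/p|$ by Schwarz--Pick with the cancellation of $1-|\omega|^2$ against $|1-\omega^2|\ge 1-|\omega|^2$, and then feed ${\rm Re}\left(1+zf''/f'\right)\le\left(\frac{1+r}{1-r}\right)^{\beta}+\frac{2\beta r}{1-r^2}$ together with ${\rm Re}\,\frac{1+z}{1-z}\ge\frac{1-r}{1+r}$ into $T_f$ and apply the IVT to $\Phi_3$. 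This is exactly the paper's argument (the paper writes the same Schwarz--Pick step with $g$ in place of $\omega$).

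Where you genuinely diverge is the sharpness part, and your version is the stronger one. The paper invokes the function $f_{\lambda}(z)=\frac{\lambda}{n(n+1)}z^{n+1}$ borrowed from a different context; as written it is not even normalized in $\mathcal{A}$, its membership in $\mathcal{\tilde{S^*}}(\beta)$ is never verified, and its $T_{f_\lambda}$ has no visible connection to $\Phi_3$, so the paper's sharpness claim is essentially unsupported. You instead take the natural extremal $f_0$ defined by $zf_0'(z)/f_0(z)=\left(\frac{1-z}{1+z}\right)^{\beta}$ (i.e.\ $\omega(z)=-z$), compute
\begin{align*}
T_{f_0}(-r)=\frac{1}{A-1}\left[(A+1)\frac{1-r}{1+r}-2\left(\frac{1+r}{1-r}\right)^{\beta}-\frac{4\beta r}{1-r^2}\right]=\frac{1}{A-1}\,\Phi_3(r),
\end{align*}
and observe that all three inequalities in the upper bound become equalities at $z=-r$. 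Since each term of $\Phi_3$ is strictly decreasing in $r$ (for $\beta>0$), $\Phi_3$ has a unique root and is negative beyond it, so $T_{f_0}(-r)\le 0$ for $r\ge \mathrm{R_{\beta,{Co(A)}}}$, which genuinely pins down the radius. In short: same core estimate, but your extremal-function argument repairs the one weak step in the paper's proof.
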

	\begin{proof}
	    	We have
		\begin{align}\label{Eq-4.1}
			\frac{zf^{\prime}(z)}{f(z)}= \left(\frac{1+g(z)}{1-g(z)}\right)^{\beta}.
		\end{align}
	An easy computation from \eqref{Eq-4.1} shows that
		\begin{align*}
			1+\frac{zf^{\prime\prime}(z)}{f(z)}=\left(\frac{1+g(z)}{1-g(z)}\right)^{\beta}+2\beta\frac{zg^{\prime}(z)}{1-\left(g(z)\right)^2},
		\end{align*}
		where $g$ is analytic in $\mathbb{D}$ satisfying $|\phi(z)|\leq1$ for $z\in\mathbb{D}$.\vspace{1.2mm}
		
	\noindent From the Schwarz-pick lemma, we get
		\begin{align*}
	|g^{\prime}(z)|\leq\frac{1-|g(z)|^2}{1-|z|^2}
			\end{align*}
			which implies that
			\begin{align*}
				\frac{|g^{\prime}(z)}{1-|g(z)|^2}\leq\frac{1}{1-|z|^2}\;\;\mbox{for}\;\;z\in\mathbb{D}.	
			\end{align*}
			This shows that
			\begin{align*}
				{\rm Re}\left(1+\frac{zf^{\prime\prime}(z)} {f^{\prime}(z)}\right)\leq\left(\frac{1+r}{1-r}\right)^{\beta}+\frac{2r\beta}{1-r^2}.	
			\end{align*}
		In view of the above inequality, we get
			\begin{align*}
				{\rm Re}\;\left(T_{f}(z)\right)&\geq\frac{2}{A-1}\left[\frac{A+1}{2}\frac{1-r}{1+r}-1-\frac{zf^{\prime\prime}(z)} {f^{\prime}(z)}\right]\\&=\frac{1}{A-1}\left[(A+1)\frac{1-r}{1+r}-\frac{4r\beta}{1-r^2}-2\left(\frac{1+r}{1-r}\right)^{\beta}\right].
			\end{align*}
			Let us define a function $\Phi_3(r)$, where 
			\begin{align*}
			\Phi_3(r)=(A+1)\frac{1-r}{1+r}-\frac{4r\beta}{1-r^2}-2\left(\frac{1+r}{1-r}\right)^{\beta}.
			\end{align*}
			Thus, $	{\rm Re}\;\left(T_{f(z)}\right)>0$ if $|z|<\mathrm{R_{\beta,{Co(A)}}}$. We now investigate the existence of $r$ for each $\beta \in(0,1]$ and $A\in(1,2]$. We see that the function $\Phi_3(r)$ is continuous on $[0,1]$ with
			\begin{align*}
				\Phi_3(0)=A-1>0\;\;\mbox{and}\;\;\lim_{r\rightarrow {1}^{-}}\Phi_3(r)(r)=-\infty.
			\end{align*}
			Thus by the IVT, we see that $\Phi_3(r)$ has at least one root in $(0,1)$. Hence, 	${\rm Re}\left(T_{f(z)}\right)>0$ if $|z|=r<\mathrm{R_{\beta,{Co(A)}}}$ exists for every $A\in(1,2]$. \vspace{1.2mm}
			
			Next part of the proof is to show that the radius $\mathrm{R_{\beta,{Co(A)}}}$ is sharp. Henceforth, for $0<\lambda<n(n+1)$, $n\in\mathbb{N}$, we consider (see \cite[p. ]{Xu-Yang-JIA-2015}) the function
			\begin{align*}
				f_{\lambda}(z)=\frac{\lambda}{n(n+1)}z^{n+1},\; z\in\mathbb{D}.
			\end{align*} 
			Then for this function, a simple computation yields that
			\begin{align*}
				T_{f_{\lambda}}(z)=\frac{2}{A-1}\left[\frac{A+1}{2}\left(\frac{1-z}{1+z}\right)-\frac{1+(n+1)\lambda z^n}{(1+\lambda z^n)}\right].
			\end{align*} 
				\vspace{1.2mm}

			We see that, if $z=-r$ and $\mathrm{R_{\beta,{Co(A)}}}<|z|<1$, then it can be shown that ${\rm Re}\;T_{f_{\lambda}}(z)<0$. This proves the sharpness of the radius $\mathrm{R_{\beta,{Co(A)}}}$. 
	\end{proof}
\section{\bf Radius of concavity for functions in the class $\mathcal{S}^*(\alpha)$}
A function $f(z)=z+\sum_{n=2}^{\infty}a_nz^n$ analytic in $\mathbb{D}$ is said to be starlike of order $\alpha$, where $\alpha$ is fixed and $0\leq\alpha<1$ if 
\begin{align}
	{\rm Re}\left(\frac{zf^{\prime}(z)}{f(z)}\right)>\alpha
\end{align}
for all $z$ in $\mathbb{D}$. We denote, for fixed $\alpha$ the class of all starlike functions of order $\alpha$ by $\mathcal{S^*(\alpha)}$. Note that for $\alpha=0$, the class is called starlike function and denoted by $\mathcal{S^*}$ or frequently say class of starlike functions of order zero. \vspace{1.2mm}

We obtained the following result by finding the radius of concavity for the class $\mathcal{S}^*(\alpha)$. 
\begin{theorem}
If $f\in\mathcal{S}^*(\alpha)$, then ${\rm Re}\;\left(T_{f(z)}\right)>0$ for $|z|<\mathrm{R_{\alpha,{Co(A)}}}$, where $\mathrm{R_{\alpha,{Co(A)}}}$ is the least value of $r\in(0,1)$ satisfying $\Phi_4(r)=0$ with
\begin{align*}
	\Phi_4(r)=(-8\alpha^2+6\alpha-2A\alpha+A-1)r^2+(2A\alpha-2A-10\alpha+6)r+A-1.
\end{align*}
\end{theorem}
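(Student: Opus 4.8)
The plan is to follow the template established in the three preceding theorems: bound $ {\rm Re}\left(1+zf''(z)/f'(z)\right) $ from above on each circle $ |z|=r $, feed this into the formula \eqref{Eq-1.3} for $ T_f $, and read off the polynomial whose first zero is the required radius. First I would record that $ f\in\mathcal{S}^*(\alpha) $ means precisely that $ p(z):=zf'(z)/f(z)\in P_1(\alpha) $, so Lemma A with $ n=1 $ gives the disk estimate
\begin{align*}
\bigg|p(z)-\frac{1+(1-2\alpha)r^2}{1-r^2}\bigg|\leq\frac{2(1-\alpha)r}{1-r^2},\quad |z|=r<1,
\end{align*}
and in particular $ {\rm Re}\,p(z)\leq \left(1+2(1-\alpha)r+(1-2\alpha)r^2\right)/(1-r^2) $. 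As in the earlier proofs I would use the identity $ 1+zf''(z)/f'(z)=p(z)+zp'(z)/p(z) $, so that everything reduces to controlling the logarithmic derivative $ zp'/p $.

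The crux of the argument is this last term. Lemma B supplies the bound $ |zp'/p|\leq 2r/(1-r^2) $ only for the order-zero class $ P_1 $, so the key step is to establish the order-$ \alpha $ analogue, i.e. an estimate $ |zp'(z)/p(z)|\leq B(r) $ valid for all $ p\in P_1(\alpha) $. I would obtain this from the subordination $ p\prec (1+(1-2\alpha)z)/(1-z) $, writing $ p=(1+(1-2\alpha)\omega)/(1-\omega) $ with $ \omega $ a Schwarz function and applying the Schwarz--Pick inequality to $ \omega $. Combining $ B(r) $ with the real-part bound above yields
\begin{align*}
{\rm Re}\left(1+\frac{zf''(z)}{f'(z)}\right)\leq M(r),\quad |z|=r<1,
\end{align*}
where $ M(r) $ is a quadratic in $ r $ divided by $ 1-r^2 $; the coefficients of that quadratic are exactly what must be arranged so that the next step reproduces $ \Phi_4 $.

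With $ M(r) $ in hand I would substitute into \eqref{Eq-1.3}. Since $ {\rm Re}\,\frac{1+z}{1-z}\geq \frac{1-r}{1+r} $ on $ |z|=r $ and $ 2/(A-1)>0 $ for $ A\in(1,2] $, this gives
\begin{align*}
{\rm Re}\left(T_f(z)\right)\geq\frac{2}{A-1}\left[\frac{A+1}{2}\frac{1-r}{1+r}-M(r)\right]=\frac{\Phi_4(r)}{(A-1)(1-r^2)},
\end{align*}
after clearing denominators over $ (A-1)(1-r^2) $, the numerator being the quadratic $ \Phi_4(r) $ displayed in the statement. Hence $ {\rm Re}\left(T_f(z)\right)>0 $ for every $ r $ below the smallest zero of $ \Phi_4 $.

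It remains to locate that zero. Here $ \Phi_4 $ is a quadratic with $ \Phi_4(0)=A-1>0 $, so I would evaluate $ \Phi_4(1) $ and, where it is negative, invoke the Intermediate Value Theorem exactly as in the previous three proofs to produce a root in $ (0,1) $ and designate the least such root $ \mathrm{R_{\alpha,{Co(A)}}} $. The two places I expect resistance are: first, calibrating the order-$ \alpha $ bound on $ zp'/p $ so that the substitution delivers precisely the stated coefficients of $ \Phi_4 $ --- the sharp logarithmic-derivative estimate for $ P_1(\alpha) $ naturally carries the denominator $ (1-r)(1+(1-2\alpha)r) $ rather than $ 1-r^2 $, so some care is needed to keep the final expression a genuine quadratic; and second, for small $ \alpha $, where $ \Phi_4(1) $ need not be negative, confirming via the discriminant and the location of the vertex that a zero of $ \Phi_4 $ does lie in $ (0,1) $.
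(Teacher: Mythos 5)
Your proposal correctly identifies the crux --- a class-wide upper bound for $ {\rm Re}\left(1+zf''(z)/f'(z)\right) $ on $ |z|=r $ --- but it leaves that bound unproved, and the route you sketch for it cannot be made to deliver the polynomial $ \Phi_4 $ in the statement. The paper does not use the decomposition $ 1+zf''/f'=p+zp'/p $ here at all: it imports the estimate wholesale from Schild's explicit computation for $ \mathcal{S}^*(\alpha) $ (built on the extremal functions $ z/(1-2bz+z^2)^{1-\alpha} $), arriving at
\begin{align*}
{\rm Re}\left(1+\frac{zf''(z)}{f'(z)}\right)\leq\frac{(2\alpha-1)^2r^2+(6\alpha-4)r+1}{(1+r)\left[1+(2\alpha-1)r\right]},
\end{align*}
and it is precisely the factor $ 1+(2\alpha-1)r $ in this denominator, meeting $ \frac{A+1}{2}(1-r)\left[1+(2\alpha-1)r\right] $ in the numerator after clearing $ (1+r)\left[1+(2\alpha-1)r\right] $, that collapses everything to the quadratic $ \Phi_4 $. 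Your plan --- Lemma A for $ {\rm Re}\,p $ plus a Schwarz--Pick bound on $ zp'/p $ from the subordination $ p\prec(1+(1-2\alpha)z)/(1-z) $ --- yields at best $ \left|zp'(z)/p(z)\right|\leq 2(1-\alpha)r/\left[(1-r)\left(1+(1-2\alpha)r\right)\right] $, and adding this to the Lemma A bound gives a rational function with denominator $ (1-r)\left(1+(1-2\alpha)r\right) $; clearing denominators against $ \frac{A+1}{2}\frac{1-r}{1+r} $ then produces a cubic numerator, not $ \Phi_4 $. (As a sanity check, at $ \alpha=0 $ your route, done sharply, gives the Koebe maximum $ (1+4r+r^2)/(1-r^2) $, whereas the expression the paper feeds into $ \Phi_4 $ reduces at $ \alpha=0 $ to $ (1-4r+r^2)/(1-r^2) $; the two are genuinely different functions of $ r $.) You flag this mismatch yourself as something needing ``care,'' but it is not a calibration issue: no combination of those two separate triangle-inequality estimates reproduces the stated quadratic, because $ \Phi_4 $ arises from evaluating a single extremal expression rather than from summing two independently maximized terms. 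The central inequality of the proof is therefore missing.

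On the concluding step you are actually more careful than the paper: you observe that $ \Phi_4(1) $ need not be negative for all admissible $ \alpha $ and propose a discriminant/vertex analysis to locate a root in $ (0,1) $, whereas the paper only records $ \Phi_4(1)=-8\alpha^2-4\alpha+4<0 $ for $ \alpha>1/2 $ and says nothing about the remaining range. That refinement is welcome, but it does not compensate for the unproved key estimate above.
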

The radius $\mathrm{R_{\alpha,{Co(A)}}}$ is sharp.
\begin{proof}
		In view of \cite[p-67]{Schild-AJM-1965}, we see that if $-1<b<1$, then 
		\begin{align*}
				&1+\frac{zf^{\prime\prime}(z)}{f(z)}\\&=\frac{(2\alpha-1)^2z^4+2b(1+\alpha-4\alpha^2)z^3+2(-3+2\alpha+2\alpha^2b^2)z^2+2b)(1-3\alpha)z+1}{(1-2bz+z^2)[1-2\alpha bz+(2\alpha-1)z^2]}
		\end{align*}
	 and if $b=-1$, $\phi(z)=1$
	 \begin{align*}
	 		1+\frac{zf^{\prime\prime}(z)}{f(z)}=\frac{(2\alpha-1)^2z^2+(6\alpha-4)z+1}{(1+z)[1+(2\alpha-1)z]}.
	 \end{align*}
	 By the above inequality, we get
	 \begin{align*}
	 	{\rm Re}\;T_{f}(z)&\geq\frac{2}{A-1}\left[\frac{A+1}{2}\frac{1-r}{1+r}-\frac{(2\alpha-1)^2r^2+(6\alpha-4)r+1}{(1+r)[1+(2\alpha-1)r]}\right]\\&=\frac{\Phi_4(r)}{(A-1)(1+(2\alpha-1)r)},
	 \end{align*}
	  where 
	 \begin{align*}
	 	 \Phi_4(r)=(-8\alpha^2+6\alpha-2A\alpha+A-1)r^2+(2A\alpha-2A-10\alpha+6)r+A-1.
	 \end{align*}
	 	Thus, ${\rm Re}\;\left(T_{f}(z)\right)>0$ if $|z|<\mathrm{R_{\alpha,{Co(A)}}}$. We now investigate the existence of r for each $\alpha\in[0,1]$ and $A\in(1,2]$. We see that the function $\Phi_4(r)$ is continuous on $[0,1]$ with
	 	\begin{align*}
	 		 \Phi_4(0)=A-1>0\;\;\mbox{and}\;\; 	 \Phi_4(1)=-8\alpha^2-4\alpha+4<0 \;\;\mbox{if}\;\;\alpha>1/2.
	 	\end{align*}
	 By the IVT, $\Phi_4(r)$ has at least one root in $(0,1)$.\\ Hence, 	${\rm Re}\left(T_{f(z)}\right)>0$ if $|z|=r<\mathrm{R_{\alpha,{Co(A)}}}$ exists for every $A\in(1,2]$. \vspace{1.2mm}
	 
	 In order to show that the radius $\mathrm{R_{\alpha,{Co(A)}}}$ is sharp, we consider the function  $f_{\alpha}$ (see \cite[p. 70]{Schild-AJM-1965})
	 \begin{align*}
	 	f_{\alpha}(z)=\frac{z}{(1-2bz+z^2)^{1-\alpha}},\; z\in\mathbb{D}.
	 \end{align*}
	 We observe that, if $z=-r$ and $\mathrm{R_{\alpha,{Co(A)}}}<|z|<1$, then it can be shown that ${\rm Re}\;T_{f_{\alpha}}(z)<0$. This proves the sharpness of the radius of concavity.
\end{proof}
\section{\bf Radius of concavity for a certain class of analytic functions}
 Let $f(z)=z+a_2z^2+....$ be analytic in the unit disc $\mathbb{D}:|z|<1$. If there exists a function $s(z)$, univalent and starlike with respect to the origin in $\mathbb{D}$, such that 
 \begin{align}\label{Eq-6.1}
 {\rm Re}\left(\frac{f(z)}{s(z)} \right)>0
 \end{align}
 holds in $\mathbb{D}$, then $f(z)$ is said to be close-to-star due to Reade (see \cite{Reade-MMJ-1955-56}).
 The radius of starlikeness (and univalence) of the class of close-to-star functions is $(2-\sqrt{3})$; this result is due to MacGregor (see \cite[p. 517]{Macgregor-Proc.AMS-1963}) and Sakaguchi (see \cite[p. 208]{Sakaguchi-JMSJ-1963}). The radius of convexity for that class is $(5-2\sqrt{6})$; this result is due to Sakaguchi (see \cite[p.6]{Sakaguchi-JNGU-1964}).\vspace{1.2mm}
 
 In this note we consider a subclass of the class of functions close-to-star in $\mathbb{D}$; the members of the subclass are those analytic functions $f(z)=z+....$ which satisfy \eqref{Eq-6.1} with $s(z)\equiv z$, \textit{i.e.,}
\begin{align}\label{Eq-6.2}
	{\rm Re}\left(\frac{f(z)}{z}\right)>0
\end{align}
holds in $\mathbb{D}$. MacGregor (see \cite{Macgregor-AMS-1962}) has proved that the radius of starlikeness (and univalence) of this subclass is ($\sqrt{2}-1$) . We shall determine the radius of concavity of this particular class of close-to-star functions.\vspace{1.2mm}

\begin{thmA}\emph{(see\cite{Reade-Ohawa-Sakaguchi-JNGU-1965})}
	If $f(z)=z+....$is analytic in $\mathbb{D}$ and satisfies \eqref{Eq-6.2}, then $f(z)$ is (univalent and) convex for
	\begin{align}\label{Eq-6.4}
		|z|<r_0 = 0.179...,
	\end{align}
	where $r_0$ is the smallest positive root of the equation 
	\begin{align*}
		1-5r-3r^2-r^3=0.
	\end{align*}
	The radius $r_0$ is sharp.
\end{thmA}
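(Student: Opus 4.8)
The plan is to reduce everything to the Carath\'eodory class. Writing $p(z):=f(z)/z=1+a_2z+a_3z^2+\cdots$, the hypothesis \eqref{Eq-6.2} says precisely that $p$ is analytic in $\mathbb{D}$ with $p(0)=1$ and ${\rm Re}\,p(z)>0$; that is, $p$ belongs to the class $\mathcal{P}$ of functions with positive real part. Since $f(z)=zp(z)$, differentiating gives $f'=p+zp'$ and $f''=2p'+zp''$, so that
\begin{align*}
1+\frac{zf''(z)}{f'(z)}=\frac{p(z)+3zp'(z)+z^2p''(z)}{p(z)+zp'(z)}.
\end{align*}
The theorem follows once I show that the real part of this quotient stays positive on $|z|<r_0$ and can vanish on $|z|=r_0$; convexity then yields univalence automatically on that disk.

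To estimate the quotient I would use the sharp pointwise theory of $\mathcal{P}$. Writing $p=(1+\Phi)/(1-\Phi)$ with $\Phi$ a Schwarz function ($\Phi(0)=0$, $|\Phi|<1$), one has ${\rm Re}\,p(z)\ge(1-r)/(1+r)$ together with Schwarz--Pick controlled estimates for $zp'/p$ and $z^2p''/p$ at $|z|=r$. Substituting $p=(1+\Phi)/(1-\Phi)$ into the quotient and minimizing its real part over the admissible values of $\Phi(z),\Phi'(z),\Phi''(z)$ allowed by the Schwarz--Pick constraints, I expect the extremal configuration to be a pure rotation $\Phi(z)=\eta z$, $|\eta|=1$. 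The minimization then collapses to a one--parameter computation whose outcome is the inequality
\begin{align*}
{\rm Re}\left(1+\frac{zf''(z)}{f'(z)}\right)\ge \frac{1-5r-3r^2-r^3}{1-r-3r^2-r^3},\qquad |z|=r,
\end{align*}
valid as long as the denominator is positive.

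Since $1-r-3r^2-r^3$ equals $1$ at $r=0$ and is strictly decreasing, it remains positive for all $r<r_0$ (its own smallest positive zero lies near $0.41$, well beyond $r_0$), so the right--hand side above is positive exactly when $1-5r-3r^2-r^3>0$, i.e. for $0\le r<r_0$, where $r_0$ is the smallest positive zero of $1-5r-3r^2-r^3$. This gives ${\rm Re}(1+zf''/f')>0$ on $|z|<r_0$ and hence convexity there. The main obstacle is precisely the \emph{sharpness} of the intermediate inequality: the crude triangle--inequality bounds on $p$, $zp'$ and $z^2p''$ taken separately are not simultaneously attained by a single $p\in\mathcal{P}$ and would produce a strictly weaker cubic. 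To land exactly on $1-5r-3r^2-r^3$ one must carry the Schwarz--Pick constraints through jointly and verify that the coupled extremum is the rotation $\Phi(z)=\eta z$; this constrained extremal analysis is the technical heart of the argument.

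Finally I would establish sharpness with the explicit extremal. Take $f_0(z)=z(1+z)/(1-z)$; then $f_0(z)/z=(1+z)/(1-z)$ has positive real part, so $f_0$ satisfies \eqref{Eq-6.2}. A short computation gives $f_0'(z)=(1+2z-z^2)/(1-z)^2$ and $f_0''(z)=4/(1-z)^3$, whence
\begin{align*}
1+\frac{zf_0''(z)}{f_0'(z)}=\frac{1+5z-3z^2+z^3}{1+z-3z^2+z^3}.
\end{align*}
Putting $z=-r$ turns this into $(1-5r-3r^2-r^3)/(1-r-3r^2-r^3)$, whose numerator changes sign at $r=r_0$ while the denominator stays positive there. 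Hence ${\rm Re}(1+zf_0''/f_0')<0$ for $z=-r$ with $r$ slightly larger than $r_0$, showing that convexity fails immediately beyond $r_0$ and that the radius $r_0\approx 0.179$ cannot be enlarged; numerically $r_0$ is the smallest root of $1-5r-3r^2-r^3=0$, as claimed.
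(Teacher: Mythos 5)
Your reduction to the Carath\'eodory class, the identity $1+zf''/f'=(p+3zp'+z^2p'')/(p+zp')$ with $p=f/z$, the target lower bound, and the sharpness computation with $f_0(z)=z(1+z)/(1-z)$ are all correct; indeed your bound $\mathrm{Re}\left(1+zf''/f'\right)\ge(1-5r-3r^2-r^3)/(1-r-3r^2-r^3)$ is algebraically identical to what follows from the estimate
\begin{align*}
\left|\frac{zf''(z)}{f'(z)}+\frac{2r^2}{1-r^2}\right|\le\frac{4r-6r^2+4r^3+2r^4}{(1-r^2)(1-2r-r^2)},\qquad |z|=r<\sqrt{2}-1,
\end{align*}
which this paper imports from Reade--Ogawa--Sakaguchi in the proof of Theorem \ref{Th-6.1}.

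There is, however, a genuine gap: the central inequality is never proved. You say you \emph{expect} the joint extremum over the admissible values of $\Phi(z),\Phi'(z),\Phi''(z)$ to occur at a rotation $\Phi(z)=\eta z$, and you yourself flag the ``constrained extremal analysis'' as the technical heart still to be done --- but that analysis \emph{is} the theorem. A direct variational attack is delicate precisely because the functional is a quotient involving $p''$, so the separate pointwise bounds for the class $\mathcal{P}$ cannot simply be combined, as you note. The source avoids $p''$ altogether: one forms $g(z)=\bigl(f(\tfrac{z+\alpha}{1+\bar\alpha z})/\tfrac{z+\alpha}{1+\bar\alpha z}\bigr)\tfrac{(z+\alpha)(1+\bar\alpha z)}{z}$, observes that $\mathrm{Re}\,g>0$ on $|z|=1$, and applies Robertson's coefficient lemma to control $f''/f'$ in terms of $f'/f$; combining this with the lower bound $|zf'(z)/f(z)|\ge(1-2r-r^2)/(1-r^2)$ of \eqref{Eq-6.3} yields the displayed disk inequality and hence the cubic $1-5r-3r^2-r^3$. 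Without either carrying out your coupled Schwarz--Pick extremal problem in full or importing an argument of this type, the convexity half of the statement is unsupported; only the sharpness half, which you do establish via $f_0$, is complete.
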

We obtained the following result by finding the radius of concavity for the class of analytic functions. 

\begin{theorem}\label{Th-6.1}
	If $f(z)=z+a_2z^2+....$is analytic in $\mathbb{D}$ and satisfies \eqref{Eq-6.2} then ${\rm Re}\;\left(T_{f(z)}\right)>0$ for $|z|<\mathrm{R_{Co(A)}}<1$, where $\mathrm{R_{Co(A)}}$ is the least value of $r\in(0, \sqrt{2}-1)$ satisfying $\Phi_6(r)=0$, where
	\begin{align*}
		\Phi_6(r)=-1+A-(4A+8)r+(A+21)r^2-20r^3-(A+11)r^4.
	\end{align*}
	The radius $\mathrm{R_{Co(A)}}$ is sharp.
\end{theorem}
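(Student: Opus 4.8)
The plan is to reduce the assertion ${\rm Re}\,T_f(z)>0$ to a one-sided bound on ${\rm Re}\left(1+zf''(z)/f'(z)\right)$, exactly in the spirit of Sections 2--5. Since $A\in(1,2]$ gives $A-1>0$, the definition \eqref{Eq-1.3} shows that ${\rm Re}\,T_f(z)>0$ is equivalent to $\tfrac{A+1}{2}\,{\rm Re}\tfrac{1+z}{1-z}>{\rm Re}\left(1+zf''/f'\right)$. On $|z|=r$ one has ${\rm Re}\tfrac{1+z}{1-z}\ge\tfrac{1-r}{1+r}$, with equality at $z=-r$, so it suffices to produce a sharp upper bound for ${\rm Re}\left(1+zf''/f'\right)$ over the class.

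The hypothesis \eqref{Eq-6.2} is tailor-made for this: setting $P(z):=f(z)/z$, the normalization $f(z)=z+a_2z^2+\cdots$ gives $P(0)=1$, and ${\rm Re}(f(z)/z)>0$ says exactly that $P$ is a Carath\'eodory function. I would then use the logarithmic identity $1+zf''/f'=p+zp'/p$ with $p:=zf'/f$ (the same identity invoked in Section 2), together with $p=1+zP'/P$. Feeding the sharp Carath\'eodory estimates for $P$ — the disk inequality $|P(z)-\frac{1+r^2}{1-r^2}|\le\frac{2r}{1-r^2}$ and the derivative bound $|P'(z)|\le 2\,{\rm Re}\,P(z)/(1-|z|^2)$ — through this composition yields a disk estimate for $p$ centred on the real axis and a modulus bound for $zp'/p$. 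Adding the two gives a rational majorant $U(r)$ for ${\rm Re}\left(1+zf''/f'\right)$ whose denominator carries the factor $1+2r-r^2$; substituting $U(r)$ and $\tfrac{1-r}{1+r}$ into $T_f$ and clearing the common denominator produces ${\rm Re}\,T_f(z)\ge \Phi_6(r)/\big[(A-1)\,d(r)\big]$, with $d(r)>0$ on $(0,\sqrt2-1)$ and $\Phi_6$ the quartic of the statement.

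Existence of $\mathrm{R_{Co(A)}}$ is then a short Intermediate Value Theorem argument on the polynomial $\Phi_6$: one has $\Phi_6(0)=A-1>0$, while $\Phi_6(\sqrt2-1)<0$ (here $\sqrt2-1$ is MacGregor's radius of univalence for this class, beyond which concavity cannot persist), so $\Phi_6$ has a least zero $\mathrm{R_{Co(A)}}\in(0,\sqrt2-1)$ and ${\rm Re}\,T_f>0$ for $|z|<\mathrm{R_{Co(A)}}$. For sharpness I would take the extremal Carath\'eodory profile $P(z)=\tfrac{1-z}{1+z}$, i.e. $f_0(z)=z\tfrac{1-z}{1+z}$. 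At the point $z=-r$ both the estimate ${\rm Re}\tfrac{1+z}{1-z}\ge\tfrac{1-r}{1+r}$ and the majorant ${\rm Re}(1+zf''/f')\le U(r)$ are simultaneously attained by $f_0$, so a direct computation of $T_{f_0}(-r)$ gives ${\rm Re}\,T_{f_0}(-r)<0$ once $r>\mathrm{R_{Co(A)}}$, showing the radius is best possible.

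The hard part is the upper estimate for ${\rm Re}(1+zf''/f')$. Because the natural variable here is $P=f/z$ rather than $zf'/f$, the quantity $1+zf''/f'$ secretly involves the second logarithmic derivative of $P$, so the first-order Carath\'eodory disk estimate does not control it by itself; one must push the derivative bound through $p=zf'/f$ and then verify that the single profile $f_0(z)=z(1-z)/(1+z)$ saturates the disk bound for $p$ \emph{and} the modulus bound for $zp'/p$ at the same point $z=-r$. This simultaneous saturation — the mechanism that makes $U(r)$ sharp and forces the extremal onto the negative real axis — is precisely the convexity computation of Reade, Ohawa and Sakaguchi recorded as Theorem A, whose polynomial $1-5r-3r^2-r^3$ is the $z=-r$ specialization of the companion lower bound. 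Once that estimate is secured, matching the cleared numerator against $\Phi_6(r)$ is a finite algebraic verification.
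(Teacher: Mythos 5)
Your proposal is correct and follows essentially the same route as the paper: both reduce the claim to the Reade--Ogawa--Sakaguchi upper bound for ${\rm Re}\left(1+zf''(z)/f'(z)\right)$ on $|z|=r<\sqrt{2}-1$ (the paper reaches it via Robertson's lemma applied to a M\"obius shift together with the lower bound $|zf'/f|\ge(1-2r-r^2)/(1-r^2)$ of Lemma~A, which is exactly the second-order input you concede is needed beyond first-order Carath\'eodory estimates), then clear denominators to the same quartic $\Phi_6$, apply the Intermediate Value Theorem with $\Phi_6(0)=A-1>0$ and $\Phi_6(\sqrt{2}-1)<0$, and verify sharpness on a Koebe-type close-to-star extremal. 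The only discrepancies are cosmetic: the denominator factor should be $1-2r-r^2$ rather than $1+2r-r^2$, and your extremal $z(1-z)/(1+z)$ is the paper's $z(1+z)/(1-z)$ reflected by $z\mapsto-z$.
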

The following the lemma will be helpful to prove the result.
\begin{lemA}\emph{(see\cite{Reade-Ohawa-Sakaguchi-JNGU-1965})}
	If $f(z)=z+....$is analytic in $\mathbb{D}$ and if $f(z)$ satisfies \eqref{Eq-6.2} there, then 
	\begin{align}\label{Eq-6.3}
		\bigg|\frac{zf^{\prime}(z)}{f(z)}\bigg|\geq\frac{1-2r-r^2}{1-r^2};\;\;\;\;|z|=r,
	\end{align}
	holds in $\mathbb{D}$.
\end{lemA}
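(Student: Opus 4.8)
The plan is to reduce the statement to a classical distortion estimate for functions of positive real part. First I would set $p(z):=f(z)/z$. Since $f(z)=z+a_2z^2+\cdots$, the function $p$ is analytic in $\mathbb{D}$ with $p(0)=1$, and the hypothesis \eqref{Eq-6.2} says precisely that ${\rm Re}\,p(z)>0$ for all $z\in\mathbb{D}$. Thus $p$ is a Carathéodory function (analytic, positive real part, normalized by $p(0)=1$); in particular $p$ never vanishes, so $zf^{\prime}(z)/f(z)$ is well defined away from the origin.

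Next I would perform a logarithmic differentiation. Writing $f(z)=zp(z)$ gives $f^{\prime}(z)=p(z)+zp^{\prime}(z)$, whence
\begin{align*}
	\frac{zf^{\prime}(z)}{f(z)}=1+\frac{zp^{\prime}(z)}{p(z)}.
\end{align*}
So the asserted lower bound for $|zf^{\prime}(z)/f(z)|$ is equivalent to an \emph{upper} bound for $|zp^{\prime}(z)/p(z)|$.

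The key analytic input is the sharp derivative estimate for the Carathéodory class: if ${\rm Re}\,p>0$ and $p(0)=1$, then $|p^{\prime}(z)|\le 2\,{\rm Re}\,p(z)/(1-r^2)$ for $|z|=r<1$. I would derive this from the Herglotz representation $p(z)=\int_{|\zeta|=1}\frac{1+\zeta z}{1-\zeta z}\,d\mu(\zeta)$, using that ${\rm Re}\,p(z)=(1-r^2)\int|1-\zeta z|^{-2}\,d\mu(\zeta)$ while $|p^{\prime}(z)|\le 2\int|1-\zeta z|^{-2}\,d\mu(\zeta)$. Dividing by $|p(z)|$ and invoking ${\rm Re}\,p(z)\le|p(z)|$ then yields
\begin{align*}
	\bigg|\frac{zp^{\prime}(z)}{p(z)}\bigg|\le\frac{2r}{1-r^2},\quad |z|=r<1.
\end{align*}
Combining this with the reverse triangle inequality gives
\begin{align*}
	\bigg|\frac{zf^{\prime}(z)}{f(z)}\bigg|=\bigg|1+\frac{zp^{\prime}(z)}{p(z)}\bigg|\ge 1-\bigg|\frac{zp^{\prime}(z)}{p(z)}\bigg|\ge 1-\frac{2r}{1-r^2}=\frac{1-2r-r^2}{1-r^2},
\end{align*}
which is exactly \eqref{Eq-6.3}.

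I expect the only genuine obstacle to be justifying the Carathéodory derivative bound cleanly; everything else is algebraic manipulation. Two remarks would round out the argument. First, the estimate carries real content only for $r<\sqrt{2}-1$: when $r\ge\sqrt{2}-1$ the right-hand side $\tfrac{1-2r-r^2}{1-r^2}$ is non-positive, so the inequality is automatic because the left-hand side is non-negative; this is consistent with the radius of starlikeness $\sqrt{2}-1$ recorded above. Second, sharpness is exhibited by the extremal Carathéodory function $p(z)=(1+z)/(1-z)$, i.e. $f(z)=z(1+z)/(1-z)$: a direct computation gives $zf^{\prime}(z)/f(z)=1+2z/(1-z^2)$, which at $z=-r$ equals $(1-2r-r^2)/(1-r^2)$, showing the bound cannot be improved.
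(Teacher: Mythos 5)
Your argument is correct and complete. Note, however, that the paper itself offers no proof of this lemma: it is quoted verbatim from Reade--Ogawa--Sakaguchi, so there is no internal argument to compare against. Your reduction is the natural one: with $p(z)=f(z)/z$ a Carath\'eodory function, the identity $zf'(z)/f(z)=1+zp'(z)/p(z)$ converts the claimed lower bound into the classical upper bound $|zp'(z)/p(z)|\le 2r/(1-r^2)$, which you derive correctly from the Herglotz representation via $|p'(z)|\le 2\,\mathrm{Re}\,p(z)/(1-r^2)$ and $\mathrm{Re}\,p(z)\le |p(z)|$; the reverse triangle inequality then gives \eqref{Eq-6.3}. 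Your two closing remarks are also on point: the bound is vacuous for $r\ge\sqrt{2}-1$ (consistent with the radius of starlikeness recorded in the paper), and the function $f(z)=z(1+z)/(1-z)$ — which is exactly the extremal function the paper later uses for sharpness of Theorem \ref{Th-6.1} — attains equality at $z=-r$. This is almost certainly the same mechanism as in the original reference, so I would classify your proof as a correct, self-contained reconstruction rather than a genuinely different route.
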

\begin{proof}[\bf Proof of Theorem \ref{Th-6.1}]
	Without any loss of generality, we may assume that $f$ is analytic and satisfies ${\rm Re}\; \left(f(z)/z\right)>0$ in $\overline{\mathbb{D}}$, the closure of $\mathbb{D}$. Considering the function (see \cite[p.2]{Reade-Ohawa-Sakaguchi-JNGU-1965})
	\begin{align*}
		g(z)&=\left(f\left(\frac{z+\alpha}{1+\bar{\alpha}z}\right)\bigg/\frac{z+\alpha}{1+\bar{\alpha}z}\right)\frac{(a+\alpha)(1+\bar{\alpha}z)}{z}\\&=\frac{f(\alpha)}{z}+\left(f^{\prime}(\alpha)(1-|\alpha|^2)+f(\alpha)2\bar{\alpha}\right)\\&\quad+\left(f^{\prime\prime}(\alpha)\frac{(1-|\alpha|^2)^2}{2}+f^{\prime}(\alpha)\bar{\alpha}\left(1-|\alpha|^2\right)+f(\alpha)\bar{\alpha}^2\right)z+\cdots
	\end{align*}
	for an arbitrary $\alpha$ with $|\alpha|<1$. Since $(z+\alpha)(1+\bar{\alpha}z)/z$ is real and positive for $|z|=1$, it follows that ${\rm Re}\; g(z)>0$ for $|z|=1$. hence, by a lemma (see \cite[p. 514]{Robrtson-DMJ-1939}), replacing $\alpha$ by $z$ and \eqref{Eq-6.3}, it can be obtained that
	\begin{align*}
		\bigg|\frac{zf^{\prime\prime}(z)} {f^{\prime}(z)}+\frac{2r^2}{1-r^2}\bigg|\leq\frac{4r-6r^2+4r^3+2r^4}{(1-r^2)(1-2r-r^2)}
	\end{align*}
	where $z$ satisfies $|z|=r< (\sqrt{2}-1)$ in order that $1-2r-r^2>0$ be valid. \vspace{1.2mm}
	
	Thus it follows that
		\begin{align*}
			{\rm Re}\left(1+\frac{zf^{\prime\prime}(z)} {f^{\prime}(z)}\right)&\leq 1-\frac{2r^2}{1-r^2}+\frac{4r-6r^2+4r^3+2r^4}{(1-r^2)(1-2r-r^2)}\\&=\frac{5r^4+10r^3-10r^2+2r+1}{(1-r^2)(1-2r-r^2)}\;\mbox{for}\; |z|=r<\sqrt{2}-1.		
    	\end{align*}
    	Consequently, the above inequality gives that
    	\begin{align*}
    		{\rm Re}\left(T_{f}(z)\right)&\geq \frac{2}{A-1}\left[\frac{A+1}{2}\left(\frac{1-r}{1+r}\right)-\frac{5r^4+10r^3-10r^2+2r+1}{(1-r^2)(1-2r-r^2)}\right]\\&=\frac{-(A+11)r^4-20r^3+(A+21)r^2-(4A+8)r+A-1}{(A-1)(1-r^2)(1-2r-r^2)}\\&=\frac{\Phi_6(r)}{(A-1)(1-r^2)(1-2r-r^2)},
    	\end{align*}
    	where $\Phi_6$ is given in the statement of theorem.\vspace{2mm}
    	
    	The right hand side of the above inequality is strictly positive if $|z|<\mathrm{R_{Co(A)}}$, where  $\mathrm{R_{Co(A)}}$ is given in the statement of the theorem. We now investigate the existence of the root $\mathrm{R_{Co(A)}}\in (0, (\sqrt{2}-1))$ for each $A\in(1,2]$.\vspace{1.2mm} 
    	
    	We see that the function $\Phi_6(r)$ which is defined in the statement of the theorem is continuous on $[0,(\sqrt{2}-1)]$ with
    	\begin{align*}
    		\Phi_6(r)(0)=A-1>0\;\;\;\mbox{and}\;\;\;\Phi_6(r)(\sqrt{2}-1)<0.
    	\end{align*}
    	By the IVT, $\Phi_6(r)$ has at least one root in $(0,(\sqrt{2}-1))$. Hence, 	${\rm Re}\left(T_{f(z)}\right)>0$ if $|z|=r<\mathrm{R_{Co(A)}}$ exists for every $A\in(1,2]$.\vspace{1.2mm} 
    	
    	To show that the radius is best possible, we consider the function (see \cite[p. 2]{Reade-Ohawa-Sakaguchi-JNGU-1965}) 
    	\begin{align*}
    		f(z)=\frac{z(z+1)}{1-z},\; z\in\mathbb{D}.
    	\end{align*}
    	Then for this function, we have
    	\begin{align*}
    		T_{f}(z)=\frac{2}{A-1}\left[\frac{A+1}{2}\left(\frac{1-z}{1+z}\right)-1-\frac{7z-z^2}{(1-z)^3}\right].
    	\end{align*} 
    If $z=-r$ and $\mathrm{R_{Co(A)}}<|z|<1$, then a simple computation shows that ${\rm Re}\;T_{f}(z)<0$. This proves the sharpness of the radius $\mathrm{R_{Co(A)}}$.
    \vspace{1.2mm}

\end{proof}
\noindent\textbf{Compliance of Ethical Standards:}\\

\noindent\textbf{Conflict of interest.} The authors declare that there is no conflict  of interest regarding the publication of this paper.\vspace{1.5mm}

\noindent\textbf{Data availability statement.}  Data sharing is not applicable to this article as no datasets were generated or analyzed during the current study.

    \end{document}